\DeclareMathOperator*{\argmin}{argmin}
\begin{document}

\title{Image Restoration Models with Optimal Transport and Total Variation Regularization
}


\author{Weijia Huang         \and
        Zhongyi Huang  \and
        Wenli Yang   \and
        Wei Zhu 
}


\institute{Weijia Huang \at
            Department of Mathematical Sciences, Tsinghua University, Beijing 100084, China \\
              \email{hwj20@mails.tsinghua.edu.cn}           
           \and
           Zhongyi Huang \at
            Department of Mathematical Sciences, Tsinghua University, Beijing 100084, China \\
           \email{zhongyih@mail.tsinghua.edu.cn} 
            \and
           Wenli Yang \at
           School of Mathematics, China University of Mining and Technology, Xuzhou 221116, China \\
           \email{yangwl19@cumt.edu.cn}
            \and
           Wei Zhu \at
           Department of Mathematics, University of Alabama, Tuscaloosa, AL 35487, USA \\
           \email{wzhu7@ua.edu}
}

\date{Received: date / Accepted: date}

\maketitle


\begin{abstract}
    In this paper, we propose image restoration models using optimal transport (OT) and total variation regularization.
    We present theoretical results of the proposed models based on the relations between the dual Lipschitz norm from OT and the $G$-norm introduced by Yves Meyer. 
    We design a numerical method based on the Primal-Dual Hybrid Gradient (PDHG) algorithm for the Wasserstain distance and the augmented Lagrangian method (ALM) 
    for the total variation, and the convergence analysis of the proposed numerical method is established. 
    We also consider replacing the total variation in our model by one of its modifications developed in \cite{zhu}, with the aim of suppressing the stair-casing effect and preserving image contrasts.
    Numerical experiments demonstrate the features of the proposed models.
     
\keywords{Image denoising \and Optimal transport \and Convex optimization \and Augmented Lagrangian method}
\subclass{65K10 \and 68U10 \and 90C30}
\end{abstract}

\section{Introduction}
\label{intro}
    The image restoration problem is a fundamental problem in signal and image processing which aims to recover a true image from an observed noisy and blurred image.
Many variational models and regularizers have been developed to solve this inverse problem.
Total variation (TV) is one of the most classical regularizers for image denoising proposed by Rudin, Osher, and Fetemi \cite{1992Nonlinear}. Assume $f$ is a given grayscale image, the Rudin-Osher-Fatemi (ROF) model is formulated as a minimization problem:
\begin{equation}\label{rof}
    \min_{u} \frac{\alpha}{2} \int_{\Omega} (f-u)^2 + \int_\Omega |\nabla u|,
\end{equation}
where $\alpha > 0$ is a tuning parameter, and $\int_\Omega |\nabla u|$ is the TV regularizer.
Although the ROF model can effectively remove image noise while preserving objects' boundaries, it suffers from the staircase effect and the degradation of image contrast \cite{meyeroscillating}. 
To overcome these limitations, variational models with higher-order regularizers have been proposed, including the total generalized variation \cite{Bredies2010TotalGV}, Euler's elastica \cite{Chan2002EULER,Euler}, the mean curvature \cite{zhu2012image,2020zhuLp}, etc.
These models suppress the staircase effect, although their associated minimization problems tend to be more complex than that of the ROF model. 
Besides the regularization term, the fidelity term in \eqref{rof}, given by the $L^2$-norm of $u-f$, is well-suited for Gaussian noise denoising. 
However, the real-world data set is corrupted by various types of noise. Therefore, models with different data fidelity terms have also been proposed to deal with specific noise types, including Rician noise \cite{2023A}, Poisson noise \cite{2017Augmented}, impulsive noise \cite{2011A}, etc.

Recently, optimal transport (OT) has been used in a variety of applications in image processing \cite{2017Optimal}, including image retrieval \cite{2000The}, image registration \cite{2004Optimal}, color transfer \cite{Chizat2018Scaling,2015Sliced}, computerized tomography reconstruction \cite{2014Iterative,2016Generalized}, etc. A survey of the application of OT in imaging problems can be found in \cite{2015Optimal}.

Burger, Franek and Sch{\"o}nlieb \cite{2012Regularized} proposed to use the Wasserstein metric for estimating and smoothing probability densities,
\begin{equation}\label{Reg}
    \min_u \frac{\lambda}{2} W_2(u, f)^2 + E(u),
\end{equation}
where $\lambda>0$ and $f \in \mathbb{P}(\Omega)$, the set of probability measure on $\Omega$ and $u$ is an estimated probability density. 
The regularizers $E(u)$ can be taken as total variation, and the data fidelity $W_2(u,\nu)^2$ is the Wasserstein-2 distance. 

The Wasserstein distance is a class of metrics to measure the distance between two probability measures or distributions with equal mass. 
For two probability measures $\mu,\nu$, the classic Kantorovitch formulation of the $L^p (p \geq 1)$ Wasserstein distance is defined as
    \begin{equation}\label{Kan}
        W_p(\mu, \nu) = \left\{ \inf_{\gamma \in \mathcal{U}(\mu, \nu)}  \int_{\Omega \times \Omega} |x-y|^p d\gamma(x,y)  \right\}^{1/p},
    \end{equation} 
where $|x-y|$ is the Euclidean distance which defines the transport cost between two points $x,y \in \Omega$, and the mass conservation laws impose that $\gamma$ is in the set
    \begin{equation}
        \mathcal{U}(\mu, \nu) = \left\{ \gamma \in \mathrm{Prob}(\Omega \times \Omega) : \quad \mathrm{Proj}_1\gamma = \mu, \quad \mathrm{Proj}_2\gamma = \nu \right\},
    \end{equation}
where $\mathrm{Prob}(\Omega \times \Omega)$ denotes the joint probability distributions on the product space and $\mathrm{Proj}_i \gamma,i=1,2$ are the marginals of $\gamma$. 
This metric also makes sense if $\mu, \nu$ are not probability measures
but still non-negative and have equal mass as $\int_\Omega \mu = \int_\Omega \nu$.
The optimal transportation problem \eqref{Kan} seeks a transportation plan $\gamma$ to transform the marginal $\mu$ into $\nu$ with minimum cost \cite{2015Convolutional}.
The advantage of using the Wasserstein distance is the ability to obtain structural properties of a density and the employment of the Wasserstein distance as a fidelity term gives a mass conservation constraint \cite{2012Regularized} on the solution and thus is density preserving.

In \cite{2014Imaging}, Lellmann and Sch{\"o}nlieb proposed to use the Kantorovich-Rubinstein (KR) norm from optimal transport as a data fidelity in the image denoising model. 
The Wasserstein-1 distance has an equivalent expression:
\begin{equation}\label{KR}
    W_1(\mu,\nu) = \mathrm{sup}_{\phi} \left\{ \int_\Omega \phi(x) d(\mu(x)-\nu(x)) : \mathrm{Lip}(\phi) \leq 1 \right\},
\end{equation} 
which only depends on the difference $\mu-\nu$.
This metric defines the dual Lipschitz norm 
\begin{equation}\label{lip}
    \|\mu\|_{\mathrm{Lip}^*} = \sup_{\phi}\left\{\int_{\Omega} \phi d\mu: \mathrm{Lip}(\phi) \leq 1\right\}.
\end{equation}
The Lipschitz constant of a function $\phi$ is denoted as
\begin{equation}\label{Lipschitz}
    \mathrm{Lip}(\phi) = \sup\left\{\frac{|\phi(x) - \phi(y)|}{|x-y|} : (x,y) \in \Omega \times \Omega, x \neq y\right\}.
\end{equation} 
This norm \eqref{lip} is only defined for $\mu$ such that $\int_\Omega d\mu = 0$, otherwise the supremum of \eqref{lip} is unbounded.
This can be fixed by adding a bound on the value of function $\phi$, which leads to the KR norm:
\begin{equation}\label{KR-norm}
    \|\mu\|_{KR,\lambda} = \mathrm{sup}\left\{\int_{\Omega} \phi d\mu:|\phi| \leq \lambda_1, \mathrm{Lip}(\phi) \leq \lambda_2\right\},
\end{equation}
for given $\lambda = (\lambda_1,\lambda_2)\succ 0$.
The Kantorovich-Rubinstein-TV denoising, or KR-TV denoising for short, is defined as the minimization problem:
\begin{equation}\label{KR-pro}
    \min_{u} \{\mathrm{TV}(u) + \|u-f\|_{KR,\lambda}\}, 
\end{equation}
where $\mathrm{TV}(u)=\int_{\Omega} |\nabla u|$. 
In \cite{2014Imaging}, it also presented the dual formulation of \eqref{KR-norm}: 
\begin{equation}\label{KR-pro-dual}
    \|\mu\|_{KR,\lambda} = \inf_{m} \lambda_1 \|\mu - \mathrm{div}(m)\|_{L_1} + \lambda_2\||m|\|_{L_1}. 
\end{equation}

The model \eqref{KR-pro} performs well when decomposing an image into a cartoon and oscillatory component, which is due to the fact that the norm \eqref{KR-pro-dual} has connections with the $G$-norm, introduced by Yves Meyer \cite{meyeroscillating} for capturing oscillating patterns, defined as
\begin{equation}\label{gnorm}
    \|v\|_G = \inf \{ \||m|\|_{L^{\infty}}\ : v = \mathrm{div}(m),\quad m \in L^{\infty}(\Omega) \},
\end{equation}
where $|m| = \sqrt{m_1^2 + m_2^2}$. 
Meyer proposed the following image decomposition model \cite{meyeroscillating}
\begin{equation}\label{g-pro}
    \inf_{(u,v)\in BV\times G} \left\{ \int_\Omega |\nabla u| + \lambda \|v\|_G :\quad f=u+v \right\} ,
\end{equation}
where the component $u$ represents cartoon, and the component $v$ depicts texture or noise.

In this paper, we propose a new image restoration model that makes use of optimal transport and total variation. 
The proposed model 
combines the total variation 
with the dual Lipschitz norm defined by the Wasserstein-1 distance,  
which has connections with the $G$-norm for capturing oscillating patterns in cartoon and texture image decomposition.
In our model, as in ROF, the total variation is used to capture objects of large scales, while the Wasserstein-1 distance, as a fidelity term, helps to keep some fine details like textures as the $G$-norm. 
In this work, we offer the analytical study of the proposed model based on the link between our model and the functional proposed by Vese and Osher in \cite{vese2003modeling} to approximate Meyer’s model \eqref{g-pro}, as generalizations of the results in \cite{meyeroscillating,osher2003image,2005ImageBMO}.

To minimize the proposed functional, we design a numerical algorithm based on algorithms for the Wasserstein-1 distance and the ROF model. 
In fact, many efficient TV minimization algorithms have been proposed including the augmented Lagrangian method (ALM) \cite{2017Augmented}, the primal-dual algorithm of Chambolle and Pock \cite{2011A}, the fast shrinkage thresholding algorithm (FISTA) \cite{2009A}, Chambolle's projection algorithm \cite{2004cha}, etc.
The special type of Wasserstein-1 distance can be computed based on the Primal-Dual Hybrid Gradient (PDHG) algorithm in \cite{2016A,2021MULTILEVEL,2017Vector}.
Inspired by a numerical approach for the G-TV model \cite{2005Image}, we solve the proposed model by minimizing a convex functional alternately in each variable and establish the convergence analysis of the proposed algorithm.

Moreover, in this paper, we extend our model by employing a variant of total variation developed in \cite{zhu}, aiming to
suppress the staircasing effect and preserve image contrasts.


The contribution of this work is summarized as follows:
\begin{enumerate}
    \item We propose image restoration models that make use of optimal transport and total variation regularization, and provide the analytical study of the proposed models based on the relation between the dual Lipschitz norm and the $G$-norm proposed by Yves Meyer \cite{meyeroscillating}.

    \item We construct numerical methods to solve the proposed models, and establish the convergence analysis of the proposed algorithm.

    \item We present experimental results to demonstrate the features of the proposed models, and apply a modification of total variation \cite{zhu} as regularization in order to promote image contrasts and suppress staircasing effect for image denoising.
\end{enumerate}

The rest of this paper is organized as follows. 
In Sect.~\ref{sec:rel}, we review the theory of optimal transport and Meyer's $G$-TV model.
In Sect.~\ref{sec:1}, we present our image restoration models and then discuss the theoretical results of the proposed models.
In Sect.~\ref{sec:alg}, we develop a numerical procedure for the proposed models and present the convergence analysis of the proposed algorithms.
Numerical results are presented in Sect.~\ref{sec:2} by applying the proposed models for real images.
Finally, we draw some conclusions in Sect.~\ref{sec:conclusion}.



\section{Related work}\label{sec:rel}

    \subsection{Optimal Transport} 
The computation of the Kantorovitch problem \eqref{Kan} involves the solution of a linear program (LP) whose cost is prohibitive whenever the distribution's dimension $n$ becomes large \cite{cuturi2013sinkhorn}.
A variety of algorithms have been proposed \cite{Peyr2019Computational}. 
In \cite{cuturi2013sinkhorn}, Cuturi proposed to approximate the optimal transport problems by adding an entropic regularization term and the resulting problem can be computed through Sinkhorn fixed-point algorithm, which is generalized to unbalanced optimal transport \cite{Chizat2018Scaling}. An inexact proximal point method for OT problem (IPOT) was developed in \cite{pmlr-v115-xie20b} which converges to the exact Wasserstein distance.
The model \eqref{Reg} is a variational problem that involves the Wasserstein distances and TV regularization, which can be solved by a computational framework proposed by Cuturi and Peyer in \cite{2015A}.
Xie et al. introduced the random block coordinate descent (RBCD) methods \cite{xie2024randomized} to directly solve this large-scale LP problem.

Benamou and Brenier proposed a dynamical formulation \cite{2000A} for the Monge-Kantorovitch problem \cite{rachev2006mass} within a fluid mechanics framework:
\begin{equation}\label{flu}
    \begin{split}
        & W_p(\mu,\nu)^p = \mathrm{inf}_{\rho,v \in C^0} \int_\Omega \int_0^1 \rho(t,x)|v(t,x)|^p dt dx,\\
        & s.t.\quad C^0 = \left \{(\rho,v) : \partial_t \rho + \mathrm{div}_x(\rho v) = 0, \quad \rho(0,\cdot) = \mu, \quad \rho(1,\cdot) = \nu \right\}.       
    \end{split}
\end{equation}
Recently, Gangbo et al. \cite{2019Unnormalized} proposed an extension for this fluid mechanics approach, enabling optimal transport of unnormalized and unequal masses. Gau et al. \cite{Gao_2023} proposed a variational model based on dynamic optimal transport for computerized tomography reconstruction.

The Kantorovich dual problem of \eqref{Kan} is
\begin{equation}\label{kan-dual}
    W_p(\mu,\nu) = \left( \sup_{\phi,\psi} \left\{ \int_\Omega \phi d\mu + \int_\Omega \psi d\nu: \phi(x) + \psi(y) \leq |x-y|^p, \quad \forall x,y \in \Omega \right\} \right)^{1/p}.
\end{equation}
The cost is $c(x,y)=|x-y|^p$ and the c-transform \cite{Peyr2019Computational} is defined as: 
\begin{equation}\label{c-tra}
    \phi^c(y) = \inf_{x \in \Omega} c(x,y) - \phi(x), \forall y \in \Omega, \quad 
    \psi^c(x) = \inf_{y \in \Omega} c(x,y) - \psi(y), \forall x \in \Omega.
\end{equation} 
Using \eqref{c-tra}, \eqref{kan-dual} can be reformulated as a convex program over a single potential,
\begin{eqnarray}
    W_p(\mu,\nu)^p & = \sup_{\psi} \int_\Omega \psi^c(x) d\mu(x) + \int_\Omega \psi(y) d\nu(y) \label{kan-dual-c}\\
    & = \sup_{\phi} \int_\Omega \phi(x) d\mu(x) + \int_\Omega \phi^c(y) d\nu(y).
\end{eqnarray}

Starting from the formulation \eqref{kan-dual-c}, one can replace the couple $(\psi^c,\psi)$ by $(\psi^c, (\psi^c)^c)$. 
According to Proposition 6.1 in \cite{Peyr2019Computational}, the pair $(\psi^c, (\psi^c)^c)$ is equivalent to any pair $(\phi,-\phi)$ such that $\mathrm{Lip}(\phi) \leq 1$, which leads to the expression for the $W_1$ distance \eqref{KR}.
 
The global Lipschitz constraint in \eqref{KR} can also be made as a uniform bound on the gradient of $\phi$, 
\begin{equation}\label{KR-1}
     W_1(\mu, \nu) = \sup_{\phi} \left\{ \int_{x \in \Omega} \phi(x)d(\mu(x) - \nu(x)) : \||\nabla \phi|\|_{\infty} \leq 1 \right\},
\end{equation}
where the constraint $\||\nabla \phi|\|_{\infty} \leq 1$ means that $|\nabla \phi(x)|\leq 1$ for any $x$.
The dual problem of \eqref{KR-1} is the following minimization problem,
\begin{equation}\label{KR-dual}
     W_1(\mu, \nu) = \inf_{m} \left\{ \||m|\|_{L^1} : \mathrm{div} (m) = \mu - \nu \right\}.
\end{equation}

Recently, the Wasserstein-1 distance \eqref{KR} has been utilized as a loss function within the framework of generative adversarial networks (WGANs) \cite{2017Wasserstein}. 
A number of numerical methods have been proposed for this special class of OT.
In \cite{2016A}, Li et al. pointed out that \eqref{KR-dual} is very similar to some problems which have been solved in the fields of compressed sensing and image processing.
So they proposed that problems \eqref{KR-1} and \eqref{KR-dual} can be jointly solved by their Lagrangians, which can be discretized and solved by using a first-order primal-dual algorithm \cite{2011A} with very simple updates at each iteration.
Later on, the paper \cite{2018Solving} introduced the proximal PD method method to calculate the Wasserstein-1 distance and proved that the number of iterations in this method is independent of the grid size.
Subsequently, the paper \cite{2021MULTILEVEL} applied the cascadic multilevel method to speed up the aforementioned algorithms, which is especially effective for large-scale problems. 
Moreover, a numerical method based on the proximal point algorithm was proposed \cite{L2017Measuring} to compute the KR norm that is used to measure the misfit between seismograms in full waveform inversion.




    
    \subsection{Meyer's $G$-TV Model for Cartoon and Texture Decomposition}

In Meyer's model \eqref{g-pro}, the $G$ space is suggested for the oscillatory component $v = \mathrm{div}(m) \in G$, which denotes the Banach space, 
\begin{equation}\label{g-space}
    G = \{f : \ f = \mathrm{div}(m), \quad \exists m \in L^{\infty}(\Omega)\}.
\end{equation}

Motivated by the following approximation to the $L^{\infty}$ norm of $|m|=\sqrt{m_1^2 + m_2^2}$, for $m_1,m_2\in L^{\infty}(\Omega)$: 
\begin{equation}\label{sob}
    \|\sqrt{m_1^2 + m_2^2}\|_{L^{\infty}} = \lim\limits_{p\to \infty} \|\sqrt{m_1^2 + m_2^2}\|_{L^p}.
\end{equation}
Vese and Osher \cite{vese2003modeling} firstly proposed the following minimization problem to overcome the difficulty of
computing \eqref{g-pro}:
\begin{equation}\label{VO}
    \inf_{(u,m)} \int_{\Omega} |\nabla u| + \frac{\alpha}{2} \int_{\Omega}|f- u - \mathrm{div}(m)|^2 + \lambda [ \int_{\Omega} ( \sqrt{m_1^2 + m_2^2} )^p ]^{\frac{1}{p}},
\end{equation}
where $\alpha,\lambda > 0$ are tuning parameters, and the second term ensures that $f \approx u + \mathrm{div}(m)$.
If the parameter $\alpha \to \infty$, and $p \to \infty$, this model is formally an approximation of the Meyer's G-TV model \eqref{g-pro}. 
In numerical experiment, the authors \cite{vese2003modeling} considered the case $p=1$ and minimized \eqref{VO} using the associated Euler-Lagrange equation and gradient descent. 

The $G$ norm is replaced by the negative Sobolev norm in \eqref{VO} which seems to be useful to model oscillating patterns \cite{aujol2005dual}. 
The semi-norm in Sobolev space is
\begin{equation}
    \|u\|_{1,p} = \||\nabla u|\|_{L^p} = ( \int_{\Omega} |\nabla u|^p)^{1/p},\quad \forall p \geq 1.
\end{equation}
The negative Sobolev norm with $1/p + 1/q = 1$ is
\begin{equation}\label{neg}
    \begin{split}
        \|v\|_{-1,p} & = \sup_{\|u\|_{1,q}=1}(v,u) = \sup_{\||\nabla u|\|_{L^q}=1}(v,u) \\
        & = \inf \left\{ \||m|\|_{L^p} : v = \mathrm{div}(m),m \in L^{p}(\Omega)\ \right\}.    
    \end{split}
\end{equation}
The TV norm is just the semi-norm $\|u\|_{1,1}$, and the $G$ norm can be written as $\|v\|_G = \|v\|_{-1,\infty} =\sup_{\||\nabla u|\|_{L^1}=1}(v,u)$. As a matter of fact, the Wasserstein-1 distance \eqref{KR-1} is a special case of the negative Sobolev norm \eqref{neg} \cite{Peyr2019Computational}.

In \cite{osher2003image}, Osher, Sole, and Vese considered $p=2$, with $v = \mathrm{div}(m)$ and $m\in L^2(\Omega)$ in \eqref{VO}.
Then the norm $\|v\|_{-1,2} = \sqrt{\int_{\Omega} (m_1^2 + m_2^2) dx}$ is exactly the seminorm of $H^{-1}(\Omega)$, the dual of the space $H^{1}(\Omega)$ endowed with $\|g\|_{H^{1}(\Omega)} = \int_{\Omega} |\nabla g|^2 dx$.
Let $m = (m_1,m_2) = \nabla g$, then $f-u = \mathrm{div}(m) = \mathrm{div}(\nabla g) = \Delta g$, i.e., $g = \Delta^{-1}(f-u)$.
A convex minimization problem was proposed to approximate \eqref{g-pro}:
\begin{equation}\label{osv}
    \inf_u  \left\{ \int_{\Omega} |\nabla u| + \lambda \int_{\Omega} |\nabla (\Delta^{-1})(f-u)|^2 dx \right\},
\end{equation}
where $\|\cdot\|_{H^{-1}(\Omega)} = \int_{\Omega} |\nabla (\Delta^{-1}) (\cdot)|^2 dx$ is the norm in $H^{-1}(\Omega)$,
which was generalized to the negative Hilbert-Sobolev space $H^{-s}$ by Lieu and Vese in \cite{lieu2008image}.





A numerical approach for the Osher and Vese's functional \eqref{VO} was proposed in \cite{2005Image,2006Structure} by solving the following problem 
\begin{equation}\label{g-pro-l2}
    \inf_{(u,v)} J(u) + \frac{\alpha}{2} \|f-u-v\|_{L^2}^2 + \mathbf{I}_{G_{\mu}}(v), 
\end{equation}
where $J(u)=\mathrm{TV}(u)$, and $\mathbf{I}_{G_{\mu}}$ is the indicator function of the closed convex set $G_{\mu} = \{v \in G : \|v\|_G \leq \mu\}$.
The parameter $\mu$ plays the same role as $\lambda$ in problem \eqref{VO}. 
The convex functional \eqref{g-pro-l2} is minimized alternately in the two variables $u$ and $v$. Each minimization can be solved by Chambolle’s projection method \cite{2004cha}.

    Other related models for simultaneous image restoration and image decomposition into cartoon and texture have been proposed in the literature.

In \cite{aujol2005dual}, Aujol and Chambolle compared the properties of norms that are dual to negative Sobolev \eqref{neg} and Besov norms in the space $E = \dot{B}^{\infty}_{-1,\infty}$, for modeling texture and noise.
They proposed a model to decompose an image $f=u+v+w$ into $u \in BV$ the cartoon component, $v \in G$ the texture component, and $w \in E$ the noise component.
 
In \cite{2003StaImage}, Starck, Elad and Donoho proposed a decomposition model using wavelets dictionaries.
Image restoration models using wavelet frames to sparsely represent the cartoon part and the texture part separately have been studied in \cite{cai2010,2012Wavelet,cai2012image}. 

Generalizing the idea of Meyer \cite{meyeroscillating}, where $v \in F = \mathrm{div}(BMO)=\dot{BMO}^{-1}$, 
Garnett et al. \cite{garnett2011modeling} proposed to model the texture component $v$ by the action of the Riesz potentials on $v$ that belongs to $BMO$ or $L^p,1\leq p\leq \infty$, which was extended to an image deblurring model by Kim and Vese \cite{kim2009image}. 

In \cite{6459598}, Michael K. Ng et al. firstly proposed an image decomposition model which can simultaneously restore blurred images with missing pixels:
\begin{equation}\label{CT}
    \inf_{u \in \mathbb{R}^n,g \in \mathbb{R}^n \times \mathbb{R}^n} \||\nabla u|\| + \frac{\alpha}{2} \int_{\Omega} |f-K(u+\mathrm{div}(m))|^2 dx + \lambda \||m|\|_{L^p},
\end{equation}
where $p = 1,2$ and $\infty$, and different choices of $K$ correspond to the observed images with different corruptions. 
For solving \eqref{CT}, the alternating direction method of multipliers (ADMM) with a Gaussian back substitution procedure proposed in \cite{2012He} was recommended.
In \cite{doi:10.1137/140967416}, a color image decomposition model based on \eqref{CT} with higher-order regularizers was proposed.


\section{The Proposed Image Restoration Models} 
\label{sec:1}
    \subsection{The proposed image denoising model}

We propose a denoising model with the dual Lipschitz norm from optimal transport and the total variation:
\begin{equation}\label{kr_new_rel}
    \inf_{(u,v)\in BV\times H} \int_{\Omega} |\nabla u| + \frac{\alpha}{2} \int_{\Omega} |f-u-v|^2 dx + \lambda \|v\|_{\mathrm{Lip}^*},
\end{equation}
where $f:\Omega \to \mathbb{R} \in L^2(\mathbb{R}^2)$ is an observed noisy image and $\alpha,\lambda>0$ are tuning parameters. $v$ is defined on the space $H = \{v : v \in L^1(\Omega), \int_{\Omega} v(x) dx = 0\}$. 

The third term is the dual Lipschitz norm of $v=\mathrm{div}(m)$, which has an equivalent formulation considering the dual problem \eqref{KR-dual} 
of the $W_1$ distance \eqref{KR-1}, 
    \begin{equation}\label{lip-dual}
        \|v\|_{\mathrm{Lip}^*} = \inf_{m} \left\{ \||m|\|_{L^{1}} : \mathrm{div} (m) = v \right\},
    \end{equation}
which is only defined for $v \in H$ from the conservation of the mass constraint \cite{Peyr2019Computational}, and is closely related to the $G$-norm \eqref{gnorm}.

In this decomposition, $f$ is decomposed into $u+v+w$, with a piecewise smooth component $u \in BV$, component $v$ to model texture or noise, and a small residual $w$.
The parameter $\alpha$ controls the $L^2$ norm of the residual $w=f-u-v$ and ensures that $f-u \approx v$ as $\alpha \to \infty$.


In the KR-TV model \eqref{KR-pro}, the KR norm has a dual formulation \eqref{KR-pro-dual}, and $\lambda_1$ is so large that $f-u \approx \mathrm{div}(m)$ in the experiment \cite{2014Imaging}.

In our model \eqref{kr_new_rel}, we use the dual Lipschitz norm \eqref{lip-dual} of $v = \mathrm{div} (m)$ where the objective $|m|$ is penalized in the $L^{1}$ norm, while Osher and Vese used the negative Sobolev norms where $|m|$ is penalized in the $L^{p}(p\to \infty)$ norm in \eqref{VO} to approximate $G$-norm \eqref{gnorm}.



    \subsection{Existence and uniqueness of minimizers}

The existence and uniqueness of minimizers of the exact decomposition \eqref{Reg} using the Wasserstein metric was analyzed in \cite{2012Regularized}, and properties of minimizers of the KR-TV denoising model \eqref{KR-pro} was derived in \cite{2014Imaging}. 
 
Our model is closely related to the functional \eqref{VO} to approximate Meyer’s model.
Theoretical results for the existence and uniqueness of models related to Meyer's model have been investigated in \cite{2005Image,2005ImageBMO,osher2003image,garnett2011modeling,2007GarImage}.
In what follows, we show the existence and uniqueness of minimizers for the proposed model \eqref{kr_new_rel} as a generalization of the results in these works.  
For this, let's first quote a lemma from \cite{osher2003image}.
\begin{lemma}\label{H-1}
    Let $V_0 = \{g \in H^{1}(\Omega): \int_{\Omega} g(x) dx = 0\}$. If $v\in L^2(\Omega)$, with $\int_{\Omega} v(x) dx = 0$, then the problem
    \begin{equation}
        -\Delta g = v,\quad \frac{\partial g}{\partial n}|_{\Omega} = 0,
    \end{equation}
    admits a unique solution $g$ in $V_0$.
\end{lemma}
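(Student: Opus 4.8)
The plan is to recast the Neumann problem in weak form and apply the Lax--Milgram theorem on the Hilbert space $V_0$. First I would derive the variational formulation: multiplying $-\Delta g = v$ by a test function $\varphi \in H^1(\Omega)$ and integrating by parts, the Neumann condition $\partial g/\partial n = 0$ eliminates the boundary integral, leaving
\begin{equation}
    \int_\Omega \nabla g \cdot \nabla \varphi \, dx = \int_\Omega v \varphi \, dx, \quad \forall \varphi \in H^1(\Omega).
\end{equation}
Because $v$ has zero mean and the left-hand side vanishes when $\varphi$ is constant, it suffices to impose this identity for $\varphi \in V_0$; conversely, decomposing any $\varphi \in H^1(\Omega)$ into its mean and its zero-mean part shows the two formulations are equivalent. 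This compatibility condition, namely $\int_\Omega v = 0$, is precisely what makes the Neumann problem solvable.

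Next I would verify the hypotheses of Lax--Milgram for the bilinear form $a(g,\varphi) = \int_\Omega \nabla g \cdot \nabla \varphi \, dx$ and the functional $L(\varphi) = \int_\Omega v \varphi \, dx$ on $V_0$. Continuity of both is immediate from Cauchy--Schwarz: $|a(g,\varphi)| \le \|\nabla g\|_{L^2}\|\nabla\varphi\|_{L^2}$ and $|L(\varphi)| \le \|v\|_{L^2}\|\varphi\|_{L^2}$, each controlled by the $H^1$ norm. The essential point is coercivity, and this is where the Poincar\'e--Wirtinger inequality enters: for $g \in V_0$ (zero mean) one has $\|g\|_{L^2} \le C_P \|\nabla g\|_{L^2}$, whence $a(g,g) = \|\nabla g\|_{L^2}^2 \ge (1 + C_P^2)^{-1}\|g\|_{H^1}^2$. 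Since $V_0$ is a closed subspace of $H^1(\Omega)$, it is itself a Hilbert space, so Lax--Milgram yields a unique $g \in V_0$ solving the weak problem.

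Uniqueness in $V_0$ then follows directly from coercivity: the difference $g$ of two solutions satisfies $a(g,g) = 0$, forcing $\nabla g = 0$ and hence $g$ constant, so the zero-mean constraint gives $g = 0$. Finally I would recover the strong form, reading off $-\Delta g = v$ in the distributional sense by testing against $\varphi \in C_c^\infty(\Omega)$, and the homogeneous Neumann condition $\partial g/\partial n = 0$ in the weak (trace) sense from the remaining boundary term for general $\varphi$.

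The main obstacle is establishing coercivity, which hinges on the Poincar\'e--Wirtinger inequality; this requires $\Omega$ to be connected and sufficiently regular (e.g.\ bounded Lipschitz), and it is exactly the restriction to the zero-mean subspace $V_0$ that both rescues coercivity (the full $H^1$ form $a$ is degenerate on constants) and singles out the unique representative among the one-parameter family of solutions differing by additive constants.
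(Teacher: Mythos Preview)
Your argument is correct: the weak formulation plus Lax--Milgram on $V_0$, with coercivity supplied by the Poincar\'e--Wirtinger inequality, is exactly the standard route to this result, and you have identified the essential ingredients (the compatibility condition $\int_\Omega v = 0$, the closedness of $V_0$ in $H^1$, and the role of the zero-mean constraint in both coercivity and uniqueness).

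There is, however, nothing to compare against in the paper itself. The authors do not prove this lemma; they quote it from \cite{osher2003image} and use it as a black box in the proof of Theorem~\ref{thm-1}. So your proposal supplies a proof where the paper offers only a citation. If anything, the source \cite{osher2003image} presumably argues along the same Lax--Milgram lines, since this is the canonical treatment of the Neumann problem in $H^1$; your sketch would be a faithful reconstruction of that.
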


We then prove the following theorem. Let $|u|_{BV(\Omega)} = \int_\Omega |\nabla u|$, and $\|u\|_{BV} = \|u\|_{L^1} + \int_{\Omega} |\nabla u|$.

\begin{theorem}\label{thm-1}
    Let $f \in L^2(\Omega)$, then there exists $(u,v) \in (BV, H)$ where $H = \{v : v \in L^1(\Omega), \int_{\Omega} v(x) dx = 0\}$, such that $(u,v)$ is a minimizer of the energy 
    \begin{equation}\label{kr_new_rel_fun}
         \inf_{(u,v)} \left\{ F(u,v) = \int_{\Omega}|\nabla u| + \frac{\alpha}{2} \int_{\Omega}|f-u-v|^2 + \lambda \|v\|_{\mathrm{Lip}^*} \mathbf{I}_{v \in H},\int_{\Omega} f = \int_{\Omega} u  \right\}.
    \end{equation}
    If, in addition, $\int_{\Omega} f \neq 0$, then the minimizer is unique.
\end{theorem}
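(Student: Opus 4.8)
The plan is to prove existence by the direct method of the calculus of variations and to extract uniqueness from the convexity of the energy together with the strict convexity of the fidelity term. First I would record that the infimum in \eqref{kr_new_rel_fun} is finite and the admissible set is nonempty: the constant $u \equiv |\Omega|^{-1}\int_\Omega f$ (so that $\mathrm{TV}(u)=0$ and $\int_\Omega u = \int_\Omega f$) together with $v\equiv 0 \in H$ is admissible with finite energy $\tfrac{\alpha}{2}\|f - |\Omega|^{-1}\int_\Omega f\|_{L^2}^2$. I then fix a minimizing sequence $(u_n,v_n)$ with $F(u_n,v_n)\to \inf F =: M<\infty$, so that each of the three nonnegative terms stays bounded along the sequence.

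For the a priori bounds, the constraint $\int_\Omega u_n = \int_\Omega f$ fixes the mean of $u_n$, so the Poincar\'e--Wirtinger inequality on $BV(\Omega)$ (in dimension $d=2$, where $BV \hookrightarrow L^2$) gives $\|u_n - |\Omega|^{-1}\int_\Omega f\|_{L^2}\le C\,\mathrm{TV}(u_n)\le C$, hence $\{u_n\}$ is bounded in $BV(\Omega)$ and in $L^2(\Omega)$. Since $f-u_n-v_n$ is bounded in $L^2$ while $f$ and $u_n$ are bounded in $L^2$, the sequence $\{v_n\}$ is bounded in $L^2(\Omega)$ too. Using the compact embedding $BV(\Omega)\hookrightarrow L^1(\Omega)$ I extract $u_n\to u$ in $L^1$ and $u_n\rightharpoonup u$ weakly in $L^2$, and by reflexivity $v_n\rightharpoonup v$ weakly in $L^2$. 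The constraints pass to the limit: $\int_\Omega u = \lim\int_\Omega u_n = \int_\Omega f$ from the $L^1$ convergence, and $\int_\Omega v = \lim\int_\Omega v_n = 0$ by testing the weak-$L^2$ convergence against the constant $1\in L^2(\Omega)$, so $v\in H$ and the indicator is inactive at the limit.

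The crux of the existence argument is the lower semicontinuity of all three terms. The total variation is $L^1$-lower semicontinuous, and $w\mapsto \tfrac{\alpha}{2}\|w\|_{L^2}^2$ is weakly lower semicontinuous, so $\tfrac{\alpha}{2}\|f-u-v\|_{L^2}^2 \le \liminf \tfrac{\alpha}{2}\|f-u_n-v_n\|_{L^2}^2$ because $f-u_n-v_n\rightharpoonup f-u-v$ weakly in $L^2$. The main point is the weak-$L^2$ lower semicontinuity of $\|\cdot\|_{\mathrm{Lip}^*}$: I would use the primal representation \eqref{lip}, $\|v\|_{\mathrm{Lip}^*}=\sup\{\int_\Omega \phi\, v : \mathrm{Lip}(\phi)\le 1\}$, and note that since $\int_\Omega v_n = 0$ each competitor $\phi$ may be normalized by an additive constant to be uniformly bounded by $\mathrm{diam}(\Omega)$; then every functional $v\mapsto\int_\Omega\phi\,v$ is weakly-$L^2$ continuous, and a supremum of weakly continuous functionals is weakly lower semicontinuous. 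Combining the three estimates yields $F(u,v)\le \liminf F(u_n,v_n)=M$, so $(u,v)$ is a minimizer.

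For uniqueness under $\int_\Omega f \neq 0$, the energy $F$ is convex, so if $(u_1,v_1)$ and $(u_2,v_2)$ are both minimizers then so is their midpoint; since $t\mapsto \tfrac{\alpha}{2}\|f-t\|_{L^2}^2$ is strictly convex, equality in the convexity estimate forces $u_1+v_1=u_2+v_2=:w$, i.e.\ the fidelity pins down the sum. It then remains to show the splitting is unique, and this is where I expect the main obstacle: both $\mathrm{TV}(\cdot)$ and $\|\cdot\|_{\mathrm{Lip}^*}$ are merely convex and positively one-homogeneous, so $F$ is \emph{not} strictly convex along directions $(h,-h)$ with $\int_\Omega h = 0$, and a priori a whole segment of minimizers sharing the same $w$ could exist. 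The plan is to exclude a nontrivial such $h=u_1-u_2$ (note $\int_\Omega h=0$): the midpoint being a minimizer forces equality in the triangle inequality simultaneously for $\mathrm{TV}$ and for $\|\cdot\|_{\mathrm{Lip}^*}$ along $h$, and I would combine the resulting alignment conditions with the mean constraint $\int_\Omega u_i = \int_\Omega f \neq 0$ — which prevents the $u_i$ from being zero-mean and thereby breaks the degeneracy that is present precisely when $\int_\Omega f = 0$ — to conclude $h\equiv 0$. Lemma~\ref{H-1}, furnishing a unique zero-mean potential $g$ with $-\Delta g = v$, is the natural device for representing the oscillatory part and writing the first-order (subgradient) optimality conditions of the two one-homogeneous terms explicitly; reconciling those two optimality conditions under the nonzero-mean constraint is the step I anticipate to be the most delicate.
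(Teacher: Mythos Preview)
Your existence argument is correct and takes a genuinely different route from the paper. You extract weak $L^2$ limits for $v_n$ and obtain lower semicontinuity of $\|\cdot\|_{\mathrm{Lip}^*}$ directly from its primal representation $\sup\{\int_\Omega\phi\,v:\mathrm{Lip}(\phi)\le1\}$: after normalizing $\phi$ by an additive constant (legitimate since $\int_\Omega v_n=0$), each $\phi$ is bounded by $\mathrm{diam}(\Omega)$ and hence lies in $L^2$, so $v\mapsto\int_\Omega\phi\,v$ is weakly-$L^2$ continuous and the supremum of such functionals is weakly lower semicontinuous. The paper instead invokes Lemma~\ref{H-1} to write $v_k=-\Delta g_k$ with $\int_\Omega g_k=0$, bounds $g_k$ in $BV$ via Poincar\'e, extracts a BV limit $g$, sets $v=-\Delta g$, and reads off $\|v\|_{\mathrm{Lip}^*}\le\liminf\|v_k\|_{\mathrm{Lip}^*}$ from lower semicontinuity of $\int_\Omega|\nabla g_k|$. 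Your approach is more elementary and sidesteps the potential $g$ entirely; the paper's approach ties the compactness to the Poisson representation of the oscillatory component.

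For uniqueness your proposal has a real gap. You correctly reach $u_1+v_1=u_2+v_2$ from strict convexity of the quadratic term and correctly isolate the residual degeneracy along $(h,-h)$ with $\int_\Omega h=0$, but your plan to close it --- ``reconcile the two subgradient optimality conditions via Lemma~\ref{H-1} under the nonzero-mean constraint'' --- is not an argument: equality in the triangle inequality for the two one-homogeneous terms does not force $h=0$, and you give no concrete mechanism by which those first-order conditions would single out $h$. The paper's device is different and much shorter. Following \cite{2005Image,2005ImageBMO}, it reduces to checking that $(\hat u+t\hat u,\hat v-t\hat u)$ cannot be a second minimizer for $t\neq0$; since every minimizer must satisfy $\int_\Omega u=\int_\Omega f$, this yields $(1+t)\int_\Omega\hat u=\int_\Omega f$ together with $\int_\Omega\hat u=\int_\Omega f\neq0$, whence $t=0$. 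Note also that Lemma~\ref{H-1} is used by the paper only in the existence step, not for uniqueness; you are proposing to deploy it in the wrong place.
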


\begin{proof}
\begin{itemize}
    \item Existence.
    Assume $\Omega=[0,1]^2$. Let $(u_k,v_k)$ be a minimizing sequence of \eqref{kr_new_rel_fun}. 
    For each $u_k$, $u_k+c_k$ is another minimizing sequence such that $c_k = \mathrm{arg}\min_{c} \int_{\Omega}|f-(u_k+c)-v_k|^2 dx$. The unique minimizer of this
    quadratic function in $c_k$ is $c_k=\int_{\Omega}(f-u_k-v_k)dx$.
    Thus, the minimizing sequence $u_k+c_k$ such that $c_k-\int_{\Omega}(f-u_k-v_k)dx=\int_{\Omega}(f-(u_k+c_k)-v_k)dx = 0$. Therefore, we can assume the minimizing sequence $(u_k,v_k)$ satisfies $\int_{\Omega}(f-u_k-v_k)dx=0$.
    We know that, for $v_k =\mathrm{div}(m_k) \in H$, we have $\int_{\Omega} v_k(x) dx = 0$. Therefore, we deduce $\int_{\Omega} f dx = \int_{\Omega} u_k dx$.
    There is $0<C<\infty$ such that the following uniform bounds hold, for all $k$,
    \begin{eqnarray}
        & \int_{\Omega}|\nabla u| \leq C,\label{ieq1}\\
        & \frac{\alpha}{2} \int_{\Omega}|f-u_k-v_k|^2 \leq C,\label{ieq2}\\
        & \|v\|_{\mathrm{Lip}^*}\mathbf{I}_{v \in H} \leq C.\label{ieq3}
    \end{eqnarray}
    By the Poincar\'{e} inequality,
    \begin{equation}
        \|u_k - \frac{1}{|\Omega|}\int_{\Omega} u_k\|_{L^2} \leq C \int_{\Omega} |\nabla u|,
    \end{equation}
    where $\frac{1}{|\Omega|}\int_{\Omega} u_k = \frac{1}{|\Omega|}\int_{\Omega} f$, therefore we have $u_k$ is uniformly bounded in $L^2$, and $\|u_k\|_{L^1} \leq C' \|u_k\|_{L^2} \leq C,\quad \forall k$. This implies that 
    \begin{equation}
        \|u_k\|_{BV} = \|u_k\|_{L^1} + \int_{\Omega} |\nabla u_k| \leq C,\quad \forall k,
    \end{equation}
    which means that there exists a function $u \in BV(\Omega)$ such that there exists a subsequence, denoted $u_{n_k}$, converges to $u$ strongly in $L^1$, and we have
    \begin{equation}
        \int_{\Omega} |\nabla u| \leq \liminf_{k\to \infty} \int_{\Omega} |\nabla u_k|.
    \end{equation}

    The condition \eqref{ieq2} and the fact that $f-u_k \in L^2(\Omega)$ is uniformly bounded, imply that $v_k \in L^2(\Omega)$, with $\int_{\Omega} v(x) dx = 0$.
    From Lemma \ref{H-1}, we can associate a unique $g_k \in H^1(\Omega)$ such that $v_k = -\Delta g_k$ and also $\int_{\Omega} g_k dx = 0$.

    The condition \eqref{ieq3} implies that $\|v_k\|_{\mathrm{Lip}^*} = \||\nabla g_k|\|_{L^1}$ is uniformly bounded.
    By the Poincar\'{e} inequality,
    \begin{equation}
        \|g_k - \frac{1}{|\Omega|}\int_{\Omega} g_k\|_{L^2} = \|g_k\|_{L^2} \leq C \int_{\Omega} |\nabla g_k|,\quad \forall k,
    \end{equation}
    we have $\|g_k\|_{L^1} \leq C' \|g_k\|_{L^2} \leq C,\quad \forall k$ similarly and
    \begin{equation}
        \|g_k\|_{BV} = \|g_k\|_{L^1} + \int_{\Omega} |\nabla g_k| \leq C,\quad \forall k,
    \end{equation}
    which means that there exists $g \in BV(\Omega)$ such that, there exists a subsequence, denoted $g_{n_k}$, converges to $g$ strongly in $L^1$, and
    \begin{equation}
        \int_{\Omega} |\nabla g| \leq \liminf_{k\to \infty} \int_{\Omega} |\nabla g_k|.
    \end{equation}
    We also have $v = - \Delta g$ and $\|v\|_{\mathrm{Lip}^*} = \||\nabla g|\|_{L^1}$, therefore, we obtain that $\|v\|_{\mathrm{Lip}^*} \leq \liminf_{k\to \infty} \|v_k\|_{\mathrm{Lip}^*}$.
    
    Similarly, we have $\int_{\Omega}|f-u-v|^2 \leq \liminf_{k\to \infty} \int_{\Omega}|f-u_k-v_k|^2$.

    Finally, we deduce that $F(u,v) \leq \liminf_{k\to \infty} F(u_k,v_k)$ and therefore $(u,v)$ minimizes $F$.

    \item Uniqueness. 
    If $(u,v)$ are minimizers of \eqref{kr_new_rel_fun}, we can show that $\int_{\Omega} f dx = \int_{\Omega} u dx$ as in \cite{2007GarImage}.
    Since $\int_{\Omega} |\nabla u| = \int_{\Omega} |\nabla (u+c)|$ for every constant $c$, we also have
    \begin{equation}
        \min_{c \in \mathbb{R}} \int_{\Omega}|f-(u+c)-v|^2 dx = \int_{\Omega}|f-u-v|^2 dx.
    \end{equation}
    But $c=0$ is the unique minimizer of this quadratic function, and in addition $c = \int_{\Omega}(f-u-v)dx$. Thus, we have $\int_{\Omega}(f-u-v)dx = 0$. 
    Therefore $\int_{\Omega} f dx = \int_{\Omega} u dx$, since $\int_{\Omega}v(x)dx=0$ using $v\in H$. 

    As in \cite{2005Image,2005ImageBMO}, the functional $F(u,v)$ in \eqref{kr_new_rel_fun} is a sum of two convex functions, and of a strictly convex function except in the direction $(u,-u)$.
    Therefore, it suffices to check that if $(\hat{u},\hat{v})$ is a minimizer, then $(\hat{u}+t\hat{u},\hat{v}-t\hat{u})$ is not a minimizer for $t \neq 0$. Since $(\hat{u},\hat{v})$ is a minimizer, then $\int_{\Omega} \hat{u} = \int_{\Omega} f$. Therefore, if $(\hat{u}+t\hat{u},\hat{v}-t\hat{u})$ is a minimizer too, then $\int_{\Omega} (1+t)\hat{u} = \int_{\Omega} f$, which implies $t=0$. Finally, we conclude the uniqueness.
    
\end{itemize}
$\hfill\square$
\end{proof}

\subsection{Characterization of minimizers}
The effect of the ROF minimization was analyzed in \cite{meyeroscillating}, which showed that the ROF model cannot split an image $f$ into a sum $u+v$ where $u$ would model the object and $v$ would be the texture or noise components.

Let $D$ be a disc centered at 0 with radius $R$, Meyer showed that by applying the ROF model \eqref{rof} to $f=a\mathbf{I}_{D}$ where $a>0$ is a constant and assume $\alpha R \geq 1/a$, the decomposition is given by
\begin{equation}\label{eqn:rof}
    f=u+v,\quad u = (a - (\alpha R)^{-1})\mathbf{I}_{D},\quad v = (\alpha R)^{-1}\mathbf{I}_{D}.
\end{equation}
Notice that $v$ is independent of $a$ and is not the texture and noise component. $u$ cannot be $f$ for any finite $\alpha$.
For proving $\|\mathbf{I}_{D}\|_{G} = R/2$, it suffices to use the following lemma from \cite{meyeroscillating}.
\begin{lemma}\label{lem:rof}
    Let $f,u,v \in L^2(\mathbb{R}^2)$, then for the norm $\|\cdot\|_{G}$ \eqref{gnorm}, if $\|f\|_{G} > \frac{1}{\alpha}$, the decomposition $f=u+v$ by using the ROF model \eqref{rof} is characterized by the following two conditions
    \begin{equation}
        \|v\|_{G} = \frac{1}{\alpha},\quad \int u(x)v(x)dx = \frac{1}{\alpha}|u|_{BV}.
    \end{equation}
\end{lemma}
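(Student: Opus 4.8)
The plan is to read the decomposition off the optimality condition of the ROF functional, exploiting the fact that the $G$-norm \eqref{gnorm} is exactly the dual (polar) norm of the total variation seminorm $J(u)=|u|_{BV}=\int|\nabla u|$. First I would rewrite the ROF model \eqref{rof} as $\min_u \{J(u)+\tfrac{\alpha}{2}\|f-u\|_{L^2}^2\}$ and set $v:=f-u$, so that $f=u+v$ is precisely the ROF splitting. The first-order (subdifferential) optimality condition then reads $0\in\partial J(u)-\alpha(f-u)$, i.e.
\[
   \alpha v \in \partial J(u).
\]
Everything else is extracting information from this single membership.

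The central step is the subdifferential characterization of the convex, lower semicontinuous, positively $1$-homogeneous functional $J$: for such $J$ one has
\[
   w\in\partial J(u)\iff \|w\|_G\le 1 \ \text{ and }\ \textstyle\int uw = J(u),
\]
where $\|w\|_G=\sup\{\int uw : J(u)\le 1\}$ is the polar norm of $J$, which by Meyer's duality coincides with the $G$-norm. Applying this with $w=\alpha v$ immediately yields $\|\alpha v\|_G\le 1$, that is $\|v\|_G\le \tfrac1\alpha$, together with $\int\alpha v\,u = J(u)$, i.e. $\int uv=\tfrac1\alpha|u|_{BV}$. This gives the second condition outright. To upgrade the inequality $\|v\|_G\le\tfrac1\alpha$ to equality I would argue by contradiction: if $\|v\|_G<\tfrac1\alpha$, then combining $\int uv=\tfrac1\alpha|u|_{BV}$ with the duality inequality $\int uv\le \|v\|_G\,|u|_{BV}$ forces $\tfrac1\alpha|u|_{BV}\le\|v\|_G|u|_{BV}<\tfrac1\alpha|u|_{BV}$, hence $|u|_{BV}=0$. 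Then $u$ is constant, and since $u\in L^2(\mathbb{R}^2)$ it must vanish, so $v=f$ and $\|f\|_G=\|v\|_G<\tfrac1\alpha$, contradicting the hypothesis $\|f\|_G>\tfrac1\alpha$. Therefore $\|v\|_G=\tfrac1\alpha$.

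For the reverse implication (so that the two conditions genuinely \emph{characterize} the ROF decomposition), I would run the same equivalence backwards: if $f=u+v$ with $\|v\|_G=\tfrac1\alpha$ and $\int uv=\tfrac1\alpha|u|_{BV}$, then $\|\alpha v\|_G=1\le 1$ and $\int(\alpha v)u=J(u)$, which is exactly $\alpha v\in\partial J(u)$; this is the optimality condition, so $u$ solves the ROF problem. Both directions thus pass through the one equivalence $\alpha v\in\partial J(u)\Leftrightarrow\{\|v\|_G\le\tfrac1\alpha,\ \int uv=\tfrac1\alpha|u|_{BV}\}$ plus the strictness argument above.

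The main obstacle is not the contradiction argument but the justification of the two duality facts I am leaning on: the identification $\|\cdot\|_G=J^\ast$ as the polar of the total variation, and the $1$-homogeneous subdifferential characterization. Since the statement is quoted from \cite{meyeroscillating}, I would cite Meyer's duality result, but the honest verification requires care on $\mathbb{R}^2$ (where functions of nonzero mean, such as $f=a\mathbf{I}_D$, can still have finite $G$-norm because mass escapes to infinity) and requires that the pairing $\int uv$ and the inequality $\int uv\le\|v\|_G\,|u|_{BV}$ be legitimate for the $L^2\cap BV$ functions at hand, which is where a density/regularity argument is needed.
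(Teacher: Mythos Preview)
The paper does not give its own proof of this lemma; it simply quotes the result from Meyer \cite{meyeroscillating}. So there is no in-paper argument to compare against directly. Your approach via the subdifferential calculus of the positively $1$-homogeneous functional $J$ is correct and is the standard modern route.

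If you want a point of comparison, look at how the paper proves its own generalization, Theorem~\ref{thm:lip-w}. There the authors (following Meyer's original style) do \emph{not} invoke $\partial J$ or the polar-norm identification abstractly; instead they perturb the minimizer by $(u,v)\mapsto(u+\epsilon g,\,v+\epsilon h)$, expand the energy, use the triangle inequality $|u+\epsilon g|_{BV}\le|u|_{BV}+|\epsilon|\,|g|_{BV}$, divide by $\epsilon$ and let $\epsilon\to 0^+$ to obtain $\|w\|_{*,\lambda}\le\tfrac1\alpha$, then specialize $(g,h)=(u,v)$ with $\epsilon\to 0^-$ to get the reverse inequality and hence equality together with the pairing identity. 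Your subdifferential argument packages exactly these two limits into the single characterization $w\in\partial J(u)\iff\|w\|_G\le1$ and $\langle w,u\rangle=J(u)$; it is shorter and conceptually cleaner, at the price of having to justify that characterization and the identity $J^*=\|\cdot\|_G$ up front. The elementary perturbation proof is more self-contained and makes the role of the hypothesis $\|f\|_G>\tfrac1\alpha$ explicit in the same way you use it (to rule out $u=0$), so the two approaches are equivalent in content.
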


We show some properties of the minimizers of model \eqref{kr_new_rel} as a generalization of Lemma \ref{lem:rof}.
Theoretical results including the characterization of minimizers of models related to Meyer’s model were presented in \cite{meyeroscillating,osher2003image,2005ImageBMO,2007GarImage,garnett2011modeling}.

\begin{theorem}\label{thm:lip-w}
    Given a function $w \in L^2(\Omega)$ and $\lambda>0$, define
    \begin{equation}\label{eq:norm}
    \|w\|_{*,\lambda} = \sup_{g\in BV,h\in H} \frac{\langle w, g+h \rangle}{|g|_{BV} + \lambda \|h\|_{\mathrm{Lip}^*}},\quad |g|_{BV} + \lambda \|h\|_{\mathrm{Lip}^*} \neq 0,
    \end{equation}
    where $\langle \cdot,\cdot \rangle$ is the $L^2$ inner product.
    Let $(u,v)$ be an optimal decomposition of $f \in L^2(\Omega)$ via \eqref{kr_new_rel_fun}, and denote $w=f-u-v$. Then we have the following:
    \begin{enumerate}
        \item $\|f\|_{*,\lambda} \leq \frac{1}{\alpha} \Longleftrightarrow u=0,v=0$ and $w=f$.
        
        \item Assume $\|f\|_{*,\lambda} > \frac{1}{\alpha}$, then $(u,v)$ is characterized by the two conditions
        \begin{equation}
            \|w\|_{*,\lambda} = \frac{1}{\alpha}, \quad
        \langle w, u+v \rangle = \frac{1}{\alpha}(|u|_{BV} + \lambda \|v\|_{\mathrm{Lip}^*}).
        \end{equation}
    
    \end{enumerate}
\end{theorem}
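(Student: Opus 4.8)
The plan is to read Theorem \ref{thm:lip-w} as a convex first–order optimality (Fenchel-type) statement generalizing Meyer's Lemma \ref{lem:rof}. The key observation is that the quantity $\|\cdot\|_{*,\lambda}$ in \eqref{eq:norm} is precisely the polar of the positively one-homogeneous convex functional $A(g,h):=|g|_{BV}+\lambda\|h\|_{\mathrm{Lip}^*}$ on $BV\times H$, taken relative to the pairing $(g,h)\mapsto\langle w,g+h\rangle$; hence both assertions amount to characterizing when $-\alpha w$ is a subgradient of $A$ at $(u,v)$. I would first record that at any minimizer $\int_\Omega w=0$ (this follows from $\int_\Omega u=\int_\Omega f$ and $v\in H$, as in the proof of Theorem \ref{thm-1}), so $\langle w,c\rangle=0$ for every constant $c$; consequently constant test functions do not affect the supremum in \eqref{eq:norm} and $\|w\|_{*,\lambda}<\infty$. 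The exact scaling identities $|tg|_{BV}=t|g|_{BV}$ and $\|th\|_{\mathrm{Lip}^*}=t\|h\|_{\mathrm{Lip}^*}$ for $t\ge 0$ make all perturbation computations below exact rather than merely first order.

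For the first equivalence I would test whether $(u,v)=(0,0)$ minimizes $F$. Expanding along a ray, for $t>0$ and any $(g,h)\in BV\times H$,
\begin{equation*}
F(tg,th)-F(0,0)=t\,A(g,h)-\alpha t\,\langle f,g+h\rangle+\tfrac{\alpha t^2}{2}\!\int_\Omega(g+h)^2,
\end{equation*}
so dividing by $t$ and letting $t\to0^+$ yields the directional derivative $A(g,h)-\alpha\langle f,g+h\rangle$. By convexity of $F$, $(0,0)$ is a global minimizer iff this is nonnegative for every $(g,h)$, i.e. iff $\langle f,g+h\rangle\le\frac1\alpha A(g,h)$ for all $(g,h)$, which by \eqref{eq:norm} is exactly $\|f\|_{*,\lambda}\le\frac1\alpha$. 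The converse (the bound forces $u=v=0$, $w=f$) I would obtain by completing the square: applying $\langle f,g+h\rangle\le\frac1\alpha A(g,h)$ with $(g,h)=(u,v)$ gives $F(u,v)\ge\frac{\alpha}{2}\int_\Omega f^2+\frac{\alpha}{2}\int_\Omega(u+v)^2\ge F(0,0)$, with equality only when $u+v=0$ and $A(u,v)=0$, hence $u=v=0$.

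For the second part, assuming $\|f\|_{*,\lambda}>\frac1\alpha$ (so by Part 1 the minimizer is nontrivial and $A(u,v)>0$), I would use two perturbations. Testing the scaling direction $((1+t)u,(1+t)v)$, the same expansion gives, exactly,
\begin{equation*}
F((1+t)u,(1+t)v)-F(u,v)=t\big(A(u,v)-\alpha\langle w,u+v\rangle\big)+\tfrac{\alpha t^2}{2}\!\int_\Omega(u+v)^2,
\end{equation*}
and since $t$ may be taken of either sign (for $t>-1$) the coefficient of $t$ must vanish, giving condition (b): $\langle w,u+v\rangle=\frac1\alpha A(u,v)$. Testing an arbitrary direction $(g,h)$ and letting $t\to0^+$ gives $\langle w,g+h\rangle\le\frac1\alpha A(g,h)$, i.e. $\|w\|_{*,\lambda}\le\frac1\alpha$; evaluating the supremum \eqref{eq:norm} at the admissible pair $(u,v)$ (legitimate because $A(u,v)>0$) together with (b) gives the reverse bound $\|w\|_{*,\lambda}\ge\frac{\langle w,u+v\rangle}{A(u,v)}=\frac1\alpha$, so condition (a) holds. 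For sufficiency (these two conditions \emph{characterize} the minimizer) I would run the computation in reverse: for any competitor $(u',v')$ with residual $w'=f-u'-v'$, convexity of the quadratic term gives $\frac{\alpha}{2}\int_\Omega w'^2-\frac{\alpha}{2}\int_\Omega w^2\ge\alpha\langle w,(u+v)-(u'+v')\rangle$, and combining with (b) and the polar inequality $\alpha\langle w,u'+v'\rangle\le\alpha\|w\|_{*,\lambda}A(u',v')=A(u',v')$ coming from (a) yields $F(u',v')-F(u,v)\ge A(u',v')-A(u',v')=0$.

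The main obstacle, and the step I would treat most carefully, is the normalization/degeneracy issue: the polar $\|\cdot\|_{*,\lambda}$ is finite only on the mean-zero class, the functional $A$ is only a seminorm (so $A(u,v)=0$ is a priori possible), and the constraint $\int_\Omega u=\int_\Omega f$ is what ties these together. I would therefore make explicit the reduction to $\int_\Omega f=0$ (subtracting the mean $\bar f$ only shifts $u$ by a constant, changing neither $|u|_{BV}$, $\|v\|_{\mathrm{Lip}^*}$, $w$, nor any pairing $\langle w,\cdot\rangle$ since $\int_\Omega w=0$), and verify that $A(u,v)>0$ whenever $\|f\|_{*,\lambda}>\frac1\alpha$ (otherwise $(0,0)$ would minimize, contradicting Part 1), so that evaluating \eqref{eq:norm} at $(u,v)$ in the final step is justified. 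Everything else is exact positive-homogeneity bookkeeping combined with the convexity and polar inequalities already in hand.
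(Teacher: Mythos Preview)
Your proposal is correct and follows essentially the same route as the paper: both prove Part~1 by testing $(0,0)$ along rays $t(g,h)$ and reading off the polar inequality, and both prove Part~2 by combining a general perturbation $(u+\epsilon g,v+\epsilon h)$ (giving $\|w\|_{*,\lambda}\le\frac1\alpha$) with the scaling perturbation $((1+t)u,(1+t)v)$ (giving the extremality relation), then reverse the argument via the polar inequality for sufficiency. Your version is marginally cleaner in two places---you obtain condition~(b) in one step from the two-sided vanishing of the linear coefficient rather than as the conjunction of two one-sided inequalities, and you explicitly address the degeneracy $A(u,v)=0$ by reducing to $\int_\Omega f=0$, a point the paper glosses over when it evaluates the supremum \eqref{eq:norm} at $(u,v)$---but the underlying mechanism is identical.
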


\begin{proof}
    \begin{enumerate}
        \item $(u,v)=(0,0)$ is a minimizer of the functional \eqref{kr_new_rel_fun} if and only if for any $g\in BV,h\in H$ and $\epsilon \in \mathbb{R}$,
        \begin{equation}
            |\epsilon g|_{BV(\Omega)} + \frac{\alpha}{2} \int_{\Omega}|f-\epsilon (g+h)|^2 + \lambda \|\epsilon h\|_{\mathrm{Lip}^*} \geq \frac{\alpha}{2} \int_{\Omega}|f|^2.
        \end{equation}
        By taking $\epsilon \to 0$, the above equation holds if and only if $\langle f,g+h \rangle \leq \frac{1}{\alpha} (|g|_{BV(\Omega)} + \lambda \|h\|_{\mathrm{Lip}^*})$.
        By the definition of \eqref{eq:norm}, we have $\|f\|_{*,\lambda} \leq \frac{1}{\alpha}$.
        
        For the converse implication,
        assume for any $g\in BV,h\in H$, 
        \begin{equation}
            \langle f,g+h \rangle \leq (|g|_{BV} + \lambda \|h\|_{\mathrm{Lip}^*})\|f\|_{*,\lambda} \leq \frac{1}{\alpha} (|g|_{BV} + \lambda \|h\|_{\mathrm{Lip}^*}).
        \end{equation}
        We have \begin{equation}
            \begin{split}
            & |g|_{BV(\Omega)} + \frac{\alpha}{2} \int_{\Omega}|f-(g+h)|^2 + \lambda \| h\|_{\mathrm{Lip}^*} = \|g\|_{BV(\Omega)} + \frac{\alpha}{2}( \int_{\Omega}|f|^2 + \int_{\Omega}|g+h|^2 - 2\langle f,g+h \rangle ) + \lambda \| h\|_{\mathrm{Lip}^*}\\
            & \geq |g|_{BV(\Omega)} + \frac{\alpha}{2}( \int_{\Omega}|f|^2 + \int_{\Omega}|g+h|^2) + \lambda \| h\|_{\mathrm{Lip}^*} - (|g|_{BV} + \lambda \|h\|_{\mathrm{Lip}^*})
            \geq \frac{\alpha}{2} \int_{\Omega}|f|^2.
            \end{split}
        \end{equation}
        Thus $u=0$ and $v=0$ give the optimal decomposition in this case.
        
        \item Suppose $\|f\|_{*,\lambda} > \frac{1}{2\alpha}$. If $(u,v)$ is a minimizer, then we have $u \neq 0$ or $v \neq 0$.
        For any $g\in BV,h\in H$ and $\epsilon \in \mathbb{R}$,
        \begin{equation}\label{eq:epi}
            \begin{split}
                &|u+\epsilon g|_{BV(\Omega)} + \frac{\alpha}{2} \int_{\Omega}|w-\epsilon (g+h)|^2 + \lambda \|v+\epsilon h\|_{\mathrm{Lip}^*} \geq |u|_{BV(\Omega)} + \frac{\alpha}{2} \int_{\Omega}|w|^2 + \lambda \|v\|_{\mathrm{Lip}^*}\\
                & \Longrightarrow |u|_{BV(\Omega)} +|\epsilon||g|_{BV(\Omega)} + \frac{\alpha}{2} \int_{\Omega}|w-\epsilon (g+h)|^2 + \lambda (\|v\|_{\mathrm{Lip}^*}+|\epsilon|\|h\|_{\mathrm{Lip}^*}) \geq |u|_{BV(\Omega)} + \frac{\alpha}{2} \int_{\Omega}|w|^2 + \lambda \|v\|_{\mathrm{Lip}^*}\\
                & \Longrightarrow |\epsilon||g|_{BV(\Omega)} + \frac{\alpha}{2} ( \int_{\Omega}|w|^2 + \epsilon^2 \int_{\Omega} |g+h|^2 - 2\epsilon \langle w,g+h \rangle) + |\epsilon| \lambda \|h\|_{\mathrm{Lip}^*} \geq \frac{\alpha}{2} \int_{\Omega}|w|^2.
            \end{split}
        \end{equation}
        Dividing both side of the last equation by $\epsilon>0$, we obtain
        \begin{equation}
            \alpha \langle w,g+h \rangle \leq |g|_{BV(\Omega)} + \lambda \|h\|_{\mathrm{Lip}^*} + \frac{\alpha \epsilon}{2} \int_{\Omega} |g+h|^2.
        \end{equation}
        Taking $\epsilon \to 0$, we obtain 
        \begin{equation}\label{eqn:w}
            \|w\|_{*,\lambda} \leq \frac{1}{\alpha}.
        \end{equation}

        If we take $(g,h) = (u,v)$ in the first equation in \eqref{eq:epi}, and take $-1 < \epsilon < 1$, then \eqref{eq:epi} implies
        \begin{equation}
            \begin{split}
            & (1+\epsilon) (|u|_{BV(\Omega)} + \lambda\|v\|_{\mathrm{Lip}^*}) + \frac{\alpha}{2} \int_{\Omega}|w-\epsilon (u+v)|^2 \geq |u|_{BV(\Omega)} + \frac{\alpha}{2} \int_{\Omega}|w|^2 + \lambda \|v\|_{\mathrm{Lip}^*}\\
            & \Longrightarrow \epsilon \alpha \langle w,u+v \rangle \leq \epsilon (|u|_{BV(\Omega)} + \lambda\|v\|_{\mathrm{Lip}^*}) + \frac{\epsilon^2 \alpha}{2} \int_{\Omega} |u+v|^2.
            \end{split}
        \end{equation}
        If $\epsilon < 0$ and letting $\epsilon \to 0$, we obtain
        \begin{equation}\label{eqn:inv}
            \alpha \langle w,u+v \rangle \geq (|u|_{BV(\Omega)} + \lambda \|v\|_{\mathrm{Lip}^*}).
        \end{equation}
        Combining \eqref{eqn:inv} and \eqref{eqn:w}, we obtain the desired results,
        \begin{equation}\label{eqn:min}
            \|w\|_{*,\lambda} = \frac{1}{\alpha},\quad \alpha \langle w,u+v \rangle = (|u|_{BV(\Omega)} + \lambda \|v\|_{\mathrm{Lip}^*}).
        \end{equation}

        Conversely, if \eqref{eqn:min} holds for some $(u,v,w)$, then for any $g\in BV,h\in H$ and $\epsilon \in \mathbb{R}$, we have
        \begin{equation}
            \begin{split}
                & |u+\epsilon g|_{BV(\Omega)} + \lambda \|v+\epsilon h\|_{\mathrm{Lip}^*} + \frac{\alpha}{2} \int_{\Omega}|w-\epsilon (g+h)|^2  \\
                & \geq \alpha \langle w,u+\epsilon g + v+\epsilon h \rangle + \frac{\alpha}{2}( \int_{\Omega}|w|^2 + \epsilon^2 \int_{\Omega} |g+h|^2 - 2\epsilon \langle w,g+h \rangle )\\
                & = \frac{\alpha}{2}\langle w,u+v \rangle + \frac{\alpha}{2} \int_{\Omega}|w|^2 = |u|_{BV(\Omega)} + \lambda \|v\|_{\mathrm{Lip}^*} + \frac{\alpha}{2} \int_{\Omega}|w|^2.
            \end{split}
        \end{equation}
        Therefore, $(u, v)$ is an optimal decomposition of \eqref{kr_new_rel_fun}.
    \end{enumerate}
    $\hfill\square$
\end{proof}

\subsection{An extension of our model}

In order to suppress the staircasing effect and preserve the image contrast, we propose to apply a variant of total variation proposed in \cite{zhu} as regularization in our model \eqref{kr_new_rel},
\begin{equation}\label{kr_new_rel_log}
    \inf_{(u,v)\in BV\times H} \int_{\Omega} R(\nabla u) + \frac{\alpha}{2} \int_{\Omega} |f-K * u-v|^2 dx + \lambda \|v\|_{\mathrm{Lip}^*},
\end{equation}
where $K$ is a convolution operator and $K * u$ means the convolution of $K$ and $u$. In the purely denoising model, $K=I$ where $I$ is an identity matrix. 
The potential function $R(\cdot)=\phi_a(\cdot)$ reads \begin{equation}
    \begin{split}
    \phi_a(x) = \left \{
        \begin{array}{lr}
            \frac{1}{2a} x^2,                 & |x|\leq a,\\
            a \ln |x| + \frac{a}{2} - a\ln a, & |x|> a.
        \end{array}
        \right.	
    \end{split}
\end{equation}
In fact, when $R(x)=|x|=\sqrt{x_1^2+x_2^2}$, we obtain the total variation $R(\nabla u)=|u|_{BV(\Omega)}$, and different potential functions have been proposed in the literature \cite{2018On,2001A}. 

The first two terms in \eqref{kr_new_rel_log} are the same as the original image restoration model proposed in \cite{zhu}, which reads
\begin{equation}\label{log-tv}
     \inf_{u\in BV} \int_{\Omega} \phi_a(\nabla u) + \frac{\alpha}{2} \int_{\Omega} |f-K * u|^2 dx.
\end{equation}


\section{Numerical Algorithm}\label{sec:alg}
    

\subsection{Discretization}\label{Discretization}


Suppose $\Omega = [0,L]^2$, where $L$ is the length of the interval. Let $h$ be the mesh size, $\Omega^h = \{(ih,jh):1 \leq i \leq N,\quad 1 \leq j \leq N \}$ is the discretized image domain. 
In \cite{2021MULTILEVEL}, $\Omega=[0,1]^2$ for the calculation of the Wasserstein-1 distance.
The distributions $u,f$ and $v$ are discretized as $N \times N$ tensors, the discrete flux $m = (m_1, m_2)$ where $m_1$ and $m_2$ are $N\times N$ tensors, which represents a map $u = \{u(x)\}_{x\in \Omega^h},f = \{f(x)\}_{x\in \Omega^h}$, and $m = \{m(x)\}_{x\in \Omega^h}$.
For simplicity, we use the same symbol for the continuous variables and their discretizations.

Define some norms
\begin{equation}\label{norm}
    \begin{aligned}
        \|m\|_2^2 = \sum_{i,j} |m_{(i,j)}|^2,\quad \|\phi\|^2 = \sum_{i,j} \phi_{(i,j)}^2,\quad \|m\|_{1,2} = \sum_{i,j} |m_{(i,j)}|,
    \end{aligned}  
\end{equation}
where $|\cdot|$ is the standard Euclidean norm.

Define inner products
\begin{equation}
    \langle \phi,\phi'\rangle = \sum_{i,j}\phi_{(i,j)}\phi_{(i,j)}',\quad
    \langle m,n\rangle = \langle m_1,n_1\rangle + \langle m_2,n_2\rangle.
\end{equation}

Define the discrete divergence operator $\mathrm{div} (m) = \partial_x^- m_1 + \partial_y^- m_2$ with homogeneous Dirichlet boundary conditions as
\begin{align}\label{div}
    \begin{aligned}
        \partial_x^- m_1 = \left \{	
        \begin{array}{lr}
            \frac{- m_{1(N-1,j)}}{h}, & i = N,\\[1mm]
            \frac{m_{1(i,j)} - m_{1(i-1,j)}}{h},  & 1 < i< N,\\[1mm]
            \frac{m_{1(1,j)}}{h}, & i=1.\\
        \end{array}
    \right.	
    \end{aligned}	
    \quad
    \begin{aligned}
        \partial_y^- m_2 = \left \{	
        \begin{array}{lr}
             \frac{- m_{2(i,N-1)}}{h}, & j = N,\\[1mm]
            \frac{m_{2(i,j)} - m_{2(i,j-1)}}{h},  &1 < j < N,\\[1mm]
            \frac{m_{2(i,1)}}{h}, & j=1.\\
        \end{array}
    \right.	
    \end{aligned}	
\end{align}
The definition of the discrete $\mathrm{div} (m)$ consistent with the zero-flux boundary conditions, i.e. $m(x) \cdot n(x) = 0, \forall x \in \partial \Omega$, where $n(x)$ denotes the normal vector at $x$ \cite{2017Vector}.



As in \cite{2010Augmented}, denote $V = \mathbb{R}^{N\times N}$ and $Q = V \times V$, then the discrete gradient operator with periodic boundary condition $\nabla  = (\partial_x^+, \partial_y^+): V \to Q$ is defined as 
\begin{align*}
    \begin{aligned}
       \partial_x^+ u = \left \{	
        \begin{array}{lr}
            \frac{u_{(i+1,j)} - u_{(i,j)}}{h},  & 1 \leq i\leq N-1,\\
            \frac{u_{(1,j)} - u_{(N,j)}}{h}, & i=N.\\
        \end{array}
    \right.	
    \end{aligned}	
    \quad
    \begin{aligned}
       \partial_y^+ u = \left \{	
        \begin{array}{lr}
            \frac{u_{(i,j+1)} - u_{(i,j)}}{h},  & 1 \leq i\leq N-1,\\
            \frac{u_{(i,1)} - u_{(i,N)}}{h}, & j=N.\\
        \end{array}
    \right.	
    \end{aligned}	
\end{align*}

The discretized minimization problem associated to \eqref{kr_new_rel} and \eqref{kr_new_rel_log} is given by:
\begin{equation}\label{kr_dis}
  \inf_{(u,v)\in V \times V} F_{\alpha,\lambda}(u,v) = \|R(\nabla u)\|_{1,2} + \frac{\alpha}{2} \|f-K*u-v\|^2 + \|v\|_{\mathrm{Lip}^*}.
\end{equation}

\subsection{Numerical Method for problem \eqref{kr_dis}}\label{sec:alg-kr}

Motivated by the numerical method for $G$-TV model \eqref{g-pro-l2} in \cite{2005Image,2006Structure}, we propose to minimize the proposed models alternately and repeatedly 
in the two variables $u$ and $v$.
Specifically, we consider the following two problems:
\begin{itemize}
    \item $u$ being fixed, we search for $v$ as a solution of:
    \begin{equation}\label{pro1}
        \inf_{v} \frac{\alpha}{2} \|f-K*u-v\|^2 + \|v\|_{\mathrm{Lip}^*}.
    \end{equation}
    
    \item $v$ being fixed, we search for $u$ as a solution of:
    \begin{equation}\label{pro2}
        \inf_{u}  \|R(\nabla u)\|_{1,2} + \frac{\alpha}{2} \|f-v - K*u\|^2.
    \end{equation}
\end{itemize}

\subsubsection{Numerical Method for \eqref{pro1}}\label{sec:pdhg}
The discretized version of Wasserstein-1 distance \eqref{KR-1} and \eqref{KR-dual} can be jointly solved by the following Lagrangian functional \cite{2021MULTILEVEL}, 
\begin{equation}\label{W1PD}
    \inf_{m} \sup_{\phi} \left\{ L(m, \phi) = \|m\|_{1,2} + \langle \phi, \mathrm{div} (m) - \mu + \nu \rangle \right\}.
\end{equation}
A primal-dual algorithm was proposed in \cite{2016A} for \eqref{W1PD}.

The dual Lipschitz norm \eqref{lip-dual} is defined by the Wasserstein-1 distance \eqref{KR-dual} as $\|v\|_{\mathrm{Lip}^*}=W_1(\mu,\nu)$ where $v=\mu-\nu$.
For fixed $f_1 = f - K*u$, the subproblem \eqref{pro1} can be written as
    \begin{equation}\label{pro1-ot}
        \begin{split}
        & \min_{v, m}\,\, \frac{\alpha}{2} \|f_1-v\|^2 + \lambda \|m\|_{1,2}, \\
        & s.t. \quad \mathrm{div} (m) = v,\; \forall x \in \Omega,
        \end{split}
    \end{equation}    
By the definition of $\mathrm{div}$ \eqref{div}, we have $\sum_{i,j} v_{(i,j)} = \sum_{i,j}\mathrm{div}(m)_{(i,j)}=0$.

The solution of \eqref{pro1-ot} is the saddle point of the following Lagrangian function, 
    \begin{equation} \label{vpro}
        \min_{v}\min_{m} \max_{\phi} \left\{ L(v,m,\phi) = \langle \mathrm{div} (m) - v ,\phi \rangle
             + \lambda\|m\|_{1,2} + \frac{\alpha}{2} \|f_1-v\|^2 \right\}.
    \end{equation} 


To solve \eqref{vpro}, we use the Primal-Dual Hybrid Gradient (PDHG) method proposed for the following convex-concave saddle function \cite{2017Vector},
    \begin{equation}
        L(x,y,z) = f(x) + g(y) + \langle Ax+By, z\rangle,
    \end{equation}
where $f,g,h$ are (closed and proper) convex functions and $A,B,C$ are linear matrix. Assume $L$ has a saddle point, then the method
\begin{equation}\label{pdhg}
    \begin{split}
        & x^{k+1} = \mathrm{argmin}_x\{L(x,y^k,z^k) + \frac{1}{2\mu}\|x - x^k\|^2\},\\
        & y^{k+1} = \mathrm{argmin}_y\{L(x^k,y,z^k) + \frac{1}{2\nu}\|y - y^k\|^2\},\\
        & z^{k+1} = \mathrm{argmax}_w\{L(2x^{k+1}-x^k,2y^{k+1}-y^k,z) - \frac{1}{2\tau}\|z - z^k\|^2\},
    \end{split}
\end{equation}
converges 
when the step sizes $\mu, \nu, \tau>0$ satisfy
    \begin{equation}\label{stepsize}
        1 > \tau\mu \lambda_{\max}(A^TA) + \tau \nu \lambda_{\max}(B^TB).
    \end{equation}
PDHG method can be interpreted as a proximal point method \cite{2012Convergence}, and the convergence analysis can be simplified.


The iterative steps of PDHG algorithm for \eqref{vpro} is 
    \begin{equation}\label{pdhg-kr}
        \begin{split}
        & m^{k+1} = \mathrm{shrink}_m (m^k + \mu \nabla \phi^k; \mu \lambda), \\
        & v^{k+1} = (v^k/\nu + \alpha f_1 + \phi) / (\alpha + 1/\nu), \\
        & \hat{m}^{k+1} = 2 m^{k+1} - m^k, \hat{v}^{k+1} = 2 v^{k+1} - v^k, \\
        & \phi^{k+1} = \phi^k + \tau (\mathrm{div} (\hat{m}^{k+1}) - \hat{v}^{k+1}).     
    \end{split}
    \end{equation}

    The operater $\mathrm{shrink}_m$ in \eqref{pdhg-kr} for the norm $\|m\|_{1,2}$ has closed form solutions 
    \begin{equation}
        \mathrm{shrink}_m(x;\mu)_{(i,j)} = \mathrm{argmin}_{z}\{\mu \|z\| + 1/2 \|z-x_{(i,j)}\|^2\} = \max\{0,1-\frac{\mu}{\|x_{(i,j)}\|_2}\} x_{(i,j)}, \quad \forall 1\leq i,j\leq N.
    \end{equation}
    
To satisfy the convergence condition \eqref{stepsize} where $A^TA = \Delta, \lambda_{max}(-\Delta) \leq 8/h^2$, we can take $\mu = h^2/(32\tau), \nu=1/(4\tau)$, and $\tau$ is tuned for faster convergence of the algorithm.

We use the fixed point residual $R^k$ as a measure of progress and as a termination criterion,
    \begin{equation}\label{residual}
    \begin{split}
        \frac{1}{h^2} R^k = \frac{1}{\mu}\|m^{k+1} - m^{k}\|^2 + \frac{1}{\nu}\|v^{k+1} - v^{k} \|^2 + \frac{1}{\tau}\|\phi^{k+1} - \phi^{k} \|^2 \\
        - 2\langle\phi^{k+1} - \phi^{k}, \mathrm{div}(m^{k+1} - m^{k}) - (v^{k+1} - v^{k}) \rangle.
    \end{split}
    \end{equation}
Iteration \eqref{pdhg} stops until $R^k$ is below $\epsilon$.

    \subsubsection{Numerical Method for \eqref{pro2}}\label{fix}

We propose using the augmented Lagrangian method (ALM) in \cite{2010Augmented,2017Augmented,zhu,2023A} to solve subproblem \eqref{pro2}.
Aujol et al. \cite{2005Image} used a projection algorithm to minimize the total variation proposed by Chambolle \cite{2004cha}. 

An equivalent constraint problem of \eqref{pro2} for fixed $f_2 = f-v$ is as follows:
\begin{align*}
    & \|R(p)\|_{1,2} + \frac{\alpha}{2} \|K*u-f_2\|^2, \\
    & s.t. \quad p = \nabla u.
\end{align*}
The augmented Lagrangian functional is
\begin{equation}\label{lag}
    \begin{split}
        \mathcal{L}(u,p,\lambda_1) = \|R(p)\|_{1,2} + \frac{\alpha}{2} \|K*u-f_2\|^2 + \langle p-\nabla u,\eta \rangle  + \frac{r}{2} \|p-\nabla u\|^2,
    \end{split}
\end{equation}
with the Lagrange multipliers $\eta=(\eta_1,\eta_2) \in Q$ and $r > 0$.


To find the saddle point of the augmented Lagrangian, 
an alternative minimization is adopted: for each $u,p$, we fix the other variables and find the minimizer of the related subproblems. 
The subproblems in the $\ell$-th iteration are as follows:
\begin{equation}\label{subproblem}
\begin{aligned}
    & u^{\ell+1} = \mathrm{arg} \min_{u\in V} \mathcal{G}(u) = \frac{\alpha}{2} \|K*u-f_2\|^2 - \langle \nabla u,\eta^{\ell} \rangle  + \frac{r}{2} \|p^{\ell}-\nabla u\|^2,\\
    & p^{\ell+1} = \mathrm{arg} \min_{p \in Q} \mathcal{G}(p) = \|R(p)\|_{1,2} + \langle p, \eta^{\ell}\rangle  + \frac{r}{2} \|p - \nabla u^{\ell+1}\|^2.
\end{aligned}
\end{equation}

As discussed in \cite{2007A,2010Augmented,2017Augmented,zhu}, the solutions of $\min_{u\in V} \mathcal{G}(u)$ is determined by the optimality condition 
\begin{eqnarray}\label{subu}
    \mathrm{div} \eta^{\ell} + r \mathrm{div}(p^{\ell}-\nabla u) + \alpha K^*(K*u - f_2) = 0,
\end{eqnarray}
which can be efficiently solved via Fast Fourier Transform (FFT) implementation as follows,
\begin{equation}\label{fft}
    \begin{split}
    &  \mathcal{F}(\partial_1^-) (\mathcal{F}((\eta_1)^{\ell})+ r \mathcal{F}((p^1)^{\ell})) + \mathcal{F}(\partial_2^-) (\mathcal{F} -((\eta_2)^{\ell})+ r \mathcal{F}((p^2)^{\ell})) - \alpha \mathcal{F}(K^*) \mathcal{F}(f_2)\\
    & = (r \mathcal{F}(\Delta) - \alpha \mathcal{F}(K^*)\mathcal{F}(K))\mathcal{F}(u),
    \end{split}
\end{equation}
where $\partial_1^-,\partial_2^-,\Delta,K$ can be viewed as convolution operators by considering the periodic boundary conditions, and $\mathcal{F}(\cdot)$ is the two-dimensional Fourier transform function.

The subproblem $\min_{p \in Q} \mathcal{G}(p)$ has closed form solution for $R(\cdot) = |\cdot|$, let $w^{\ell+1} = \nabla u^{\ell+1} - \eta^{\ell} / r$,
\begin{equation}\label{subp}
    p_{(i,j)}^{\ell+1} = w_{(i,j)}^{\ell+1}\max\left\{0, 1-\frac{1}{r |w_{(i,j)}^{\ell+1}|}\right\}.
\end{equation}

The algorithm for the subproblem \eqref{pro2} is summarized in Algorithm \ref{alg2}, and the convergence results are given in \cite{2010Augmented,2017Augmented,zhu,2012He}.
\begin{algorithm}
    \caption{Augmented Lagrangian method (ALM) for the subproblem \eqref{pro2}.}
    \begin{algorithmic}
    \label{alg2}
    \REQUIRE initialize: $u^0,p^0$ and $\eta^0$.        
    \FOR {$\ell = 0$ to L}          

        \STATE Compute the minimizer $u^{\ell+1}$ by \eqref{fft} for the associated subproblems with the fixed $\eta^{\ell},p^{\ell}$.
         
        \STATE Compute the minimizer $p^{\ell+1}$ by \eqref{subp} for the associated subproblems with the fixed  $\eta^{\ell},u^{\ell+1}$.

        \STATE Update the Lagrangian multiplier $\eta$
        \begin{equation}
	       \eta^{\ell+1} = \eta^{\ell} + r (p^{\ell+1} - \nabla u^{\ell+1}).
        \end{equation}
        
        \STATE Measure the relative residuals and relative errors and stop the iteration if they are smaller than a given threshold.
    \ENDFOR
    \end{algorithmic}
\end{algorithm}



\subsection{Convergence of the Proposed Algorithm}

The algorithm to solve the proposed models is summarized in Algorithm \ref{alg_uv}, and the convergence of Algorithm \ref{alg_uv} can be proved by a straightforward generalization of the results in \cite{2005Image,2006Structure}.

\begin{algorithm}
    \caption{Numerical Method for Problem \eqref{kr_dis}.}
    \begin{algorithmic}
    \label{alg_uv}
    \REQUIRE Initialize: $u^0$ and $p^0$.        
    \FOR {$n = 0$ to N}          

        \STATE Compute the minimizer of the subproblem
        \begin{equation}
        v_{n+1} = \argmin_{v} \frac{\alpha}{2}\|f - K* u_n - v\|^2 + \lambda \|v\|_{\mathrm{Lip}^*}, 
        \end{equation} 
        by PDHG method \eqref{pdhg-kr} for its Lagrangian functional \eqref{vpro}.
         
        \STATE Compute the minimizer of the subproblem
        \begin{equation}
        u_{n+1} = \argmin_{u} \|R(\nabla u)\|_{1,2} + \frac{\alpha}{2} \|f - v_{n+1} - K * u \|^2
        \end{equation}
        by Algorithm \ref{alg2} for its Lagrangian functional \eqref{lag}.

    
    \ENDFOR
    \end{algorithmic}
\end{algorithm}

\begin{proposition}
    Suppose $(u_n,v_n)$ is the iteration sequence generated by Algorithm \ref{alg_uv} 
    to solve \eqref{kr_dis} with $K=I$ and $R(\cdot)=|\cdot|$ which is the discretized functional associated to \eqref{kr_new_rel}, then $F_{\alpha,\lambda}(u_n,v_n)$ converges to the unique minimum of $F_{\alpha,\lambda}$ on $V \times V$.
\end{proposition}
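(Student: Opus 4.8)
The plan is to run the standard convergence argument for alternating (block-coordinate) minimization of a convex, coercive, lower-semicontinuous functional, exploiting that with $K=I$ and $R(\cdot)=|\cdot|$ the functional $F_{\alpha,\lambda}$ is genuinely convex and that its nonsmooth parts, $\|\nabla u\|_{1,2}$ and $\lambda\|v\|_{\mathrm{Lip}^*}$, are separated between the two blocks while the only coupling term $\frac{\alpha}{2}\|f-u-v\|^2$ is smooth. First I would record that $F_{\alpha,\lambda}$ attains its minimum: it is convex and, by the discrete analogue of the Poincar\'e estimates used in the proof of Theorem~\ref{thm-1}, coercive on $V\times V$ (the only flat direction of the quadratic coupling, $(w,-w)$, is penalized to $+\infty$ by $\|\nabla w\|_{1,2}+\lambda\|w\|_{\mathrm{Lip}^*}$ unless $w=0$), so a minimizer $(\bar u,\bar v)$ exists and the minimum value $m^\ast=\min F_{\alpha,\lambda}$ is well defined; uniqueness of the minimizer follows as in Theorem~\ref{thm-1}.

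Next I would establish the monotone decrease. Because Algorithm~\ref{alg_uv} minimizes exactly in $v$ and then in $u$, we have $F_{\alpha,\lambda}(u_n,v_{n+1})\le F_{\alpha,\lambda}(u_n,v_n)$ and $F_{\alpha,\lambda}(u_{n+1},v_{n+1})\le F_{\alpha,\lambda}(u_n,v_{n+1})$, so the sequence $F_{\alpha,\lambda}(u_n,v_n)$ is nonincreasing and bounded below by $m^\ast$, hence convergent to some limit $L\ge m^\ast$. The key quantitative ingredient is that each subproblem is $\alpha$-strongly convex in its active variable: the map $v\mapsto \frac{\alpha}{2}\|f-u_n-v\|^2+\lambda\|v\|_{\mathrm{Lip}^*}$ inherits strong convexity from the fidelity term, and likewise $u\mapsto\|\nabla u\|_{1,2}+\frac{\alpha}{2}\|f-u-v_{n+1}\|^2$. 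Evaluating the strong-convexity inequality at the previous iterate gives
\begin{equation}
F_{\alpha,\lambda}(u_n,v_n)-F_{\alpha,\lambda}(u_{n+1},v_{n+1})\ \ge\ \frac{\alpha}{2}\big(\|v_n-v_{n+1}\|^2+\|u_n-u_{n+1}\|^2\big).
\end{equation}
Telescoping and using boundedness of $F_{\alpha,\lambda}(u_n,v_n)$ shows the right-hand sides are summable, so $\|u_{n+1}-u_n\|\to0$ and $\|v_{n+1}-v_n\|\to0$; coercivity bounds the iterates, so a subsequence $(u_{n_k},v_{n_k})$ converges to some $(u^\ast,v^\ast)$, and the vanishing increments force $(u_{n_k+1},v_{n_k+1})$ and $(u_{n_k},v_{n_k+1})$ to the same limit.

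Finally I would pass to the limit in the first-order conditions. The exact subproblem solves give $0\in\partial_v F_{\alpha,\lambda}(u_{n_k},v_{n_k+1})$ and $0\in\partial_u F_{\alpha,\lambda}(u_{n_k+1},v_{n_k+1})$; since the subdifferential of a convex lower-semicontinuous function has closed graph in finite dimensions, letting $k\to\infty$ yields $0\in\partial_v F_{\alpha,\lambda}(u^\ast,v^\ast)$ and $0\in\partial_u F_{\alpha,\lambda}(u^\ast,v^\ast)$. Here the separable-plus-smooth structure is decisive: because $\partial F_{\alpha,\lambda}(u,v)=\big(\partial_u F_{\alpha,\lambda}(u,v)\big)\times\big(\partial_v F_{\alpha,\lambda}(u,v)\big)$ for this functional, the two partial conditions combine into $(0,0)\in\partial F_{\alpha,\lambda}(u^\ast,v^\ast)$, so $(u^\ast,v^\ast)$ is a global minimizer and $F_{\alpha,\lambda}(u^\ast,v^\ast)=m^\ast$. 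By continuity of $F_{\alpha,\lambda}$, $L=m^\ast$, which proves $F_{\alpha,\lambda}(u_n,v_n)\to\min F_{\alpha,\lambda}$; uniqueness of the minimizer then upgrades this to convergence of the whole sequence $(u_n,v_n)$ to $(\bar u,\bar v)$.

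\textbf{Main obstacle.} The delicate point is the limit passage in the subdifferential inclusions together with the claim that partial optimality implies joint optimality. The latter is exactly where convexity and the block separation of the nonsmooth terms are used; it would fail for the nonconvex variant $R=\phi_a$, which is why the statement is restricted to $R(\cdot)=|\cdot|$ and $K=I$. A secondary technical care is that $\|\cdot\|_{\mathrm{Lip}^*}$ is extended-real-valued (it equals $+\infty$ off the zero-mean subspace $H$, since the discrete divergence in \eqref{div} always sums to zero), so one must verify it is a proper convex lower-semicontinuous function before invoking the closed-graph property of its subdifferential.
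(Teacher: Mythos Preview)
Your argument is correct and follows the same overall skeleton as the paper's proof---monotone decrease of $F_{\alpha,\lambda}(u_n,v_n)$, compactness from coercivity, passage to the limit in the block-optimality conditions, and finally uniqueness to upgrade subsequential to full convergence---but the technical route you take is somewhat different. The paper does not invoke strong convexity or a telescoping estimate; instead it observes that the limit $m$ of the decreasing values satisfies $m=F_{\alpha,\lambda}(\hat u,\hat v)=F_{\alpha,\lambda}(\hat u,\tilde v)$ for any cluster point $\tilde v$ of $v_{n_k+1}$, and then uses \emph{strict} convexity of the $v$-subproblem to force $\tilde v=\hat v$, after which the variational inequalities \eqref{nk}--\eqref{nk-1} are passed to the limit directly. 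Your use of $\alpha$-strong convexity to obtain summable increments $\|u_{n+1}-u_n\|,\|v_{n+1}-v_n\|\to0$ replaces that cluster-point identification cleanly and is a more quantitative device; it also makes transparent why $(u_{n_k},v_{n_k+1})$ and $(u_{n_k+1},v_{n_k+1})$ share the same limit. Conversely, the paper's argument is slightly lighter in that it never writes down subdifferentials or the closed-graph property, working only with the inequalities defining each subproblem minimizer. Your explicit justification that partial optimality implies joint optimality via the separable-plus-smooth decomposition $\partial F_{\alpha,\lambda}=\partial_uF_{\alpha,\lambda}\times\partial_vF_{\alpha,\lambda}$ is a point the paper glosses over (it simply asserts that a coordinate-wise critical point is a global minimizer), so in that respect your write-up is more careful.
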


\begin{proof}
    As we solve problems \eqref{pro1} and \eqref{pro2} alternatively, we have
    \begin{equation}
        F_{\alpha,\lambda}(u_n,v_n) \geq F_{\alpha,\lambda}(u_n,v_{n+1}) \geq F_{\alpha,\lambda}(u_{n+1},v_{n+1}).
    \end{equation}
    The sequence $F_{\alpha,\lambda}(u_n,v_n)$ is non-increasing and bounded from below by 0, it thus converges to a constant $m$ in $\mathbb{R}$.
    As $F_{\alpha,\lambda}(u,v)$ is coercive, we can extract a subsequence $(u_{n_k},v_{n_k})$ which converges to $(\hat{u}, \hat{v})$ as $n_k \longrightarrow \infty$.
    Moreover, we have for all $n_k \in \mathbb{N}$,
    \begin{eqnarray}
         & F_{\alpha,\lambda}(u_{n_k},v_{n_k+1}) \leq F_{\alpha,\lambda}(u_{n_k},v) \quad \forall v \in V, \label{nk}\\
         & F_{\alpha,\lambda}(u_{n_k},v_{n_k}) \leq F_{\alpha,\lambda}(u,v_{n_k}) \quad \forall u \in V. \label{nk-1}
    \end{eqnarray}
    Denote $\tilde{v}$ as a cluster point of $v_{n_k+1}$, we have 
    \begin{equation}
        m = F_{\alpha,\lambda}(\hat{u},\hat{v}) = F_{\alpha,\lambda}(\hat{u},\tilde{v}). 
    \end{equation}
    Since \eqref{pro1} is strictly convex with respect to $v$, it has a unique minimizer for given $\hat{u}$.
    Hence $\tilde{v} = \hat{v}$, i.e. $\hat{v}$ is a cluster point of $v_{n_k+1}$. 
    By passing to the limit in \eqref{nk}, we have $F_{\alpha,\lambda}(\hat{u}, \hat{v}) = \inf_{v} F_{\alpha,\lambda}(\hat{u},v)$. 
    And by passing to the limit in \eqref{nk-1}, we have $F_{\alpha,\lambda}(\hat{u}, \hat{v}) = \inf_{u} F_{\alpha,\lambda}(u,\hat{v})$.
    Thus we have $(\hat{u}, \hat{v})$ is a critical point of $F_{\alpha,\lambda}$. 
    This is equivalent to $m = F_{\alpha,\lambda}(\hat{u}, \hat{v}) = \inf_{(u,v)} F_{\alpha, \lambda}$.
    Hence the sequence $F_{\alpha,\lambda}(u_n,v_n)$ converges to the unique minimum of $F_{\alpha,\lambda}$.
    We deduce that the sequence $(u_n,v_n)$ converges to the unique minimizer.                 $\hfill\square$
\end{proof}

\section{Numerical Experiments}
\label{sec:2}

    In this section, we present numerical results by applying the proposed models for real images. We also compare them with other models, including the ROF model and $G$-TV model.

    We set the mesh sizes $h=1$ in discretization in Sect.~\ref{Discretization}. 
    The images should be rescaled in $[0, 1]$ for its intensity.

In Figure \ref{Fig:car_bar_2}, we show the decomposition of a given image into cartoon and texture components using the ROF model, $G$-TV model and the proposed model.
For the $G$-TV model \eqref{g-pro-l2} and our model \eqref{kr_new_rel}, we set $\alpha=200$ and the other parameters are adjusted such that the cartoon parts have the same total variations as the ROF model \eqref{rof}.
Note the loss of intensity on the face area for the ROF model.
The $G$-TV model is an improvement of the ROF model to decompose an image \cite{2006Structure}.
The texture parts in the scarf are better captured in the oscillatory component $v$ and the structure of the face is kept in the cartoon part by using our model. 
\begin{figure}
    \centering
    \subfigure[$f$]{
        \begin{minipage}{0.3\linewidth}
        \centering
        \includegraphics[width= \linewidth]{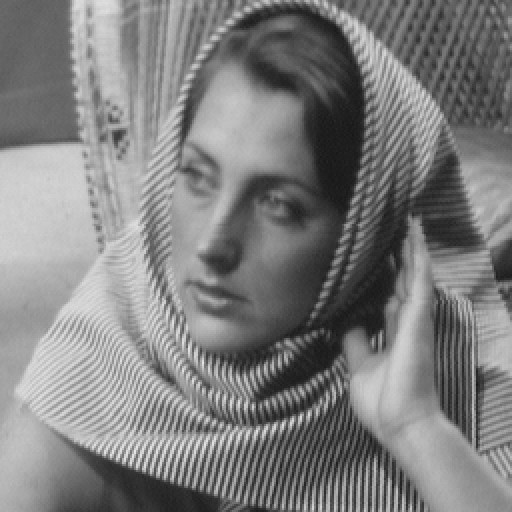}
    \end{minipage}
    }
    
    \subfigure[$u$ by ROF]{
        \begin{minipage}{0.3\linewidth}
        \centering
        \includegraphics[width= \linewidth]{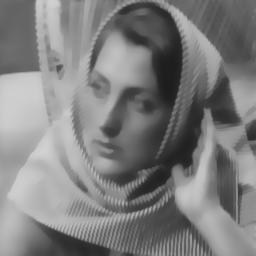}
    \end{minipage}
    }
    \subfigure[$u$ by $G$-TV]{
        \begin{minipage}{0.3\linewidth}
        \centering
        \includegraphics[width= \linewidth]{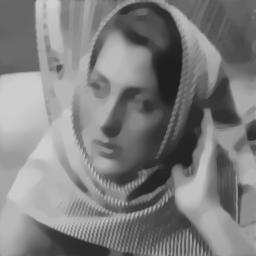}
    \end{minipage}
    }
    \subfigure[$u$ by our model]{
        \begin{minipage}{0.3\linewidth}
        \centering
        \includegraphics[width=\linewidth]{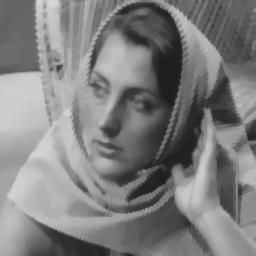}
    \end{minipage}
    }
    
    \subfigure[$f-u+100/255$]{
        \begin{minipage}{0.3\linewidth}
        \centering
        \includegraphics[width= \linewidth]{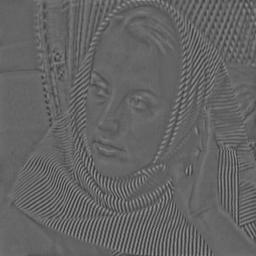}
    \end{minipage}
    } 
    \subfigure[$f-u+100/255$]{
        \begin{minipage}{0.3\linewidth}
        \centering
        \includegraphics[width= \linewidth]{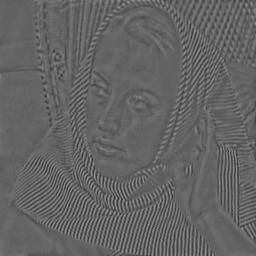}
    \end{minipage}
    }
    \subfigure[$f-u+100/255$]{
        \begin{minipage}{0.3\linewidth}
        \centering
        \includegraphics[width=\linewidth]{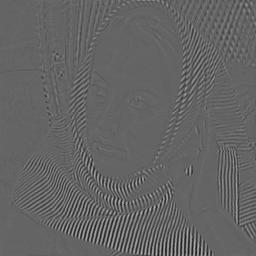}
    \end{minipage}
    }
    
    \caption{The cartoon and texture decomposition $f=u+v$ obtained from the ROF, $G$-TV \eqref{g-pro-l2}, and our model \eqref{kr_new_rel}. 
    }
    \label{Fig:car_bar_2}
\end{figure}

    In the following, we apply \eqref{kr_new_rel} to denoising real images.
Moreover, we extend our model by employing a variant of total variation developed in \cite{zhu} in order to preserve image contrast and suppress the staircase phenomenon.
We compare our model with the modified TV (MTV) \eqref{kr_new_rel_log} with the original MTV model \eqref{log-tv} in \cite{zhu}.

The noisy image is corrupted by Gaussian noise with standard deviation $\sigma$, i.e. the noise $n \sim \mathcal{N}(0,\sigma^2)$.
To be fair for the performance comparison among these proposed models, we choose the regularization parameters such that the norm of the removed noise $\|f-u\|$, is almost the same for each model. 
Specifically, the averaged Frobenius norm $\|f-u\|_F = \sqrt{\|f-u\|/|\Omega|} $ is of the same size as the noise level $\sigma$ for each model, as in \cite{1992Nonlinear,zhu,bregman,osher2003image,2020zhuLp}.

To compare the quality of the results obtained by these models, we list in Table~\ref{table:psnr_1} the peak signal-to-noise ratios (PSNRs) for the image denoising experiments in this paper.
In this test, for the fair comparison, we set up the regularization parameter for each model such that the quantity $\|f-u\| \approx N\sigma$.
\begin{table}
\caption{The values of PSNR for the restored images.}
\label{table:psnr_1}        
\begin{tabular}{lllll}
\hline\noalign{\smallskip}
\textbf{Algorithm}                    & \textbf{ROF}  & \textbf{MTV} & \textbf{Our model} & \textbf{Our model with MTV}   \\   
    \hline Figure \ref{Fig:u_n_house} & 30.04         & 29.46            & 30.46             & 30.18  \\ 
    \hline  Figure \ref{Fig:u_n_but}  & 26.95          &  27.18              & 27.49          & 27.31 \\ 
    \hline Figure \ref{Fig:u_n_par}  &  28.24        & 28.13         & 28.13     & 28.41  \\ 
    \hline Figure \ref{Fig:man_log}  &  30.12        & 30.09          & 30.23      & 30.26  \\ 
\noalign{\smallskip}\hline
\end{tabular}
\end{table}

In Figure \ref{Fig:u_n_house}, we consider a non-texture image, "House", corrupted by a Gaussian noise with zero mean and standard deviation or level $\sigma=25/255$.   
Though the ROF model can efficiently eliminate the noise, it suffers from the loss of image contrast.
As shown in the residual images $f-u+100/255$, many meaningful signals are removed as noise by the ROF model, while much fewer signals are swept as noise by our model \eqref{kr_new_rel}.
This example demonstrates that our model is capable of preserving image contrast.
Similar to the $G$-TV model \cite{2005Image}, the component $v = \mathrm{div}(m)$ of the proposed models have a zero mean which is the mean of the added white Gaussian noise.
The loss of intensity property is always present for the ROF model. The white regions depicted in images restored by the proposed models are brighter and the dark regions are darker than those of the ROF model.

Moreover, the proposed model with MTV \eqref{kr_new_rel_log} preserves image contrasts better than the MTV model \eqref{log-tv}, and suppresses the staircase effect compared with our model with TV. 
The data in Table~\ref{table:psnr_1} show that our model produces a higher PSNR than the ROF model.

\begin{figure}
    \centering
    \subfigure[Original Image]{
        \begin{minipage}{0.23\linewidth}
        \centering
        \includegraphics[width= \linewidth]{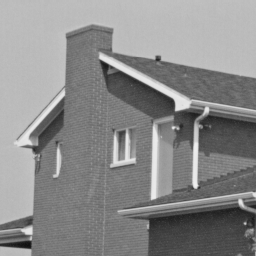}
    \end{minipage}
    }  
    \subfigure[Noisy Image]{
        \begin{minipage}{0.23\linewidth}
        \centering
        \includegraphics[width= \linewidth]{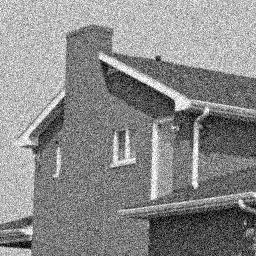}
    \end{minipage}
    } 
    
    \subfigure[$u$ by ROF]{
        \begin{minipage}{0.23\linewidth}
        \centering
        \includegraphics[width= \linewidth]{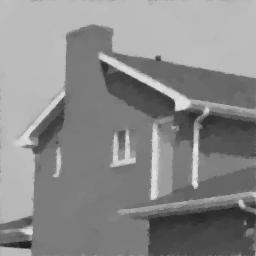}
    \end{minipage}
    }
    \subfigure[$f-u+100/255$]{
        \begin{minipage}{0.23\linewidth}
        \centering
        \includegraphics[width= \linewidth]{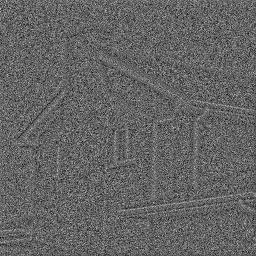}
    \end{minipage}
    }      
    \subfigure[$u$ by our model]{
        \begin{minipage}{0.23\linewidth}
        \centering
        \includegraphics[width= \linewidth]{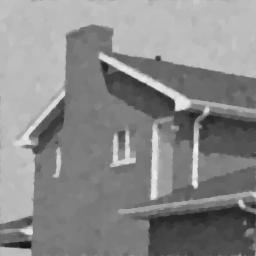}
    \end{minipage}
    }
    \subfigure[$f-u+100/255$]{
        \begin{minipage}{0.23\linewidth}
        \centering
        \includegraphics[width=\linewidth]{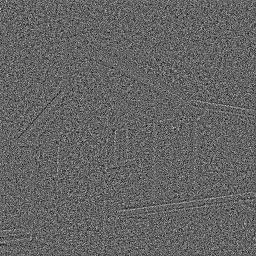}
    \end{minipage}
    }

    \subfigure[$u$ by MTV]{
        \begin{minipage}{0.23\linewidth}
        \centering
        \includegraphics[width= \linewidth]{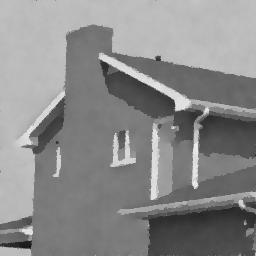}
    \end{minipage}
    }
    \subfigure[$f-u+100/255$]{
        \begin{minipage}{0.23\linewidth}
        \centering
        \includegraphics[width=\linewidth]{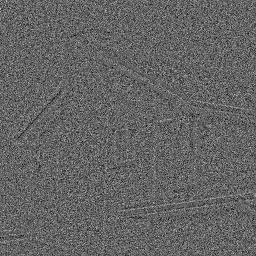}
    \end{minipage}
    }  
    \subfigure[$u$ by our model with MTV]{
        \begin{minipage}{0.23\linewidth}
        \centering
        \includegraphics[width= \linewidth]{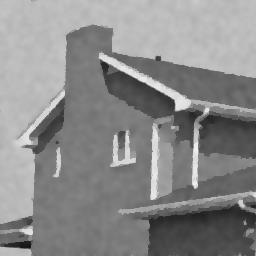}
    \end{minipage}
    }
    \subfigure[$f-u+100/255$]{
        \begin{minipage}{0.23\linewidth}
        \centering
        \includegraphics[width= \linewidth]{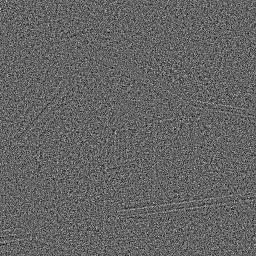}
    \end{minipage}
    }

    \caption{Results using the ROF model, our model \eqref{kr_new_rel}, MTV ($a=5/255$) \eqref{log-tv}, and the proposed model with MTV \eqref{kr_new_rel_log}. 
    The additive noise $n$ has the norm $\|n\| = 25$.
    For the proposed models \eqref{kr_new_rel} and \eqref{kr_new_rel_log}, we choose parameter $\alpha=100$, and the other parameters are tuned so that the removed noise parts have the same norm $\|f-u\| \approx 25$ as the ROF model.
    }
    \label{Fig:u_n_house}
\end{figure}

In Figure \ref{Fig:u_n_but}, we consider an image "Butterfly", corrupted by Gaussian noise with level $\sigma=25/255$.
The residual images show that our model sweeps fewer meaningful signals as noise than the ROF model, especially for the large scale part, which suggests that our model is able to preserve image contrast.
The proposed model with MTV \eqref{kr_new_rel_log} keeps some fine details in the restored images. 
Table~\ref{table:psnr_1} shows that the PSNR for our model is higher than that of the ROF model.
\begin{figure}
    \centering
    \subfigure[Original Image]{
        \begin{minipage}{0.23\linewidth}
        \centering
        \includegraphics[width= \linewidth]{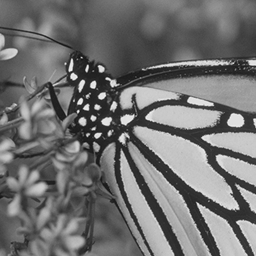}
    \end{minipage}
    }
    \subfigure[Noisy Image]{
        \begin{minipage}{0.23\linewidth}
        \centering
        \includegraphics[width= \linewidth]{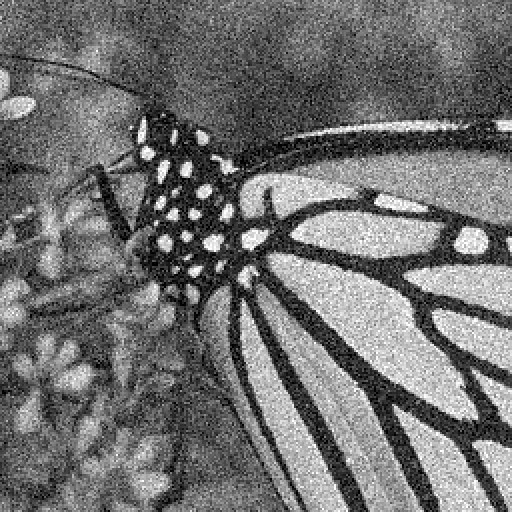}
    \end{minipage}
    }
    
    \subfigure[$u$ by ROF]{
        \begin{minipage}{0.23\linewidth}
        \centering
        \includegraphics[width= \linewidth]{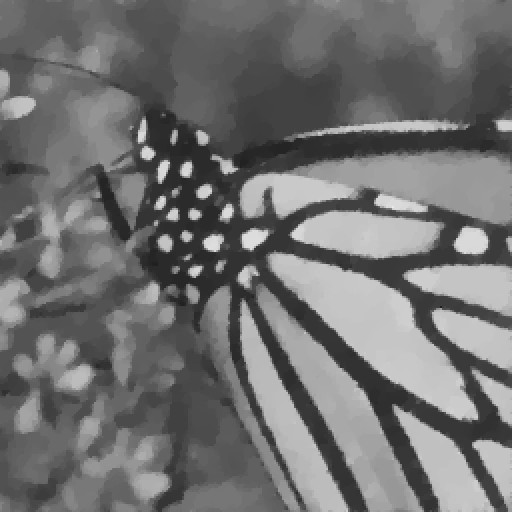}
    \end{minipage}
    }
    \subfigure[$f-u+100/255$]{
        \begin{minipage}{0.23\linewidth}
        \centering
        \includegraphics[width= \linewidth]{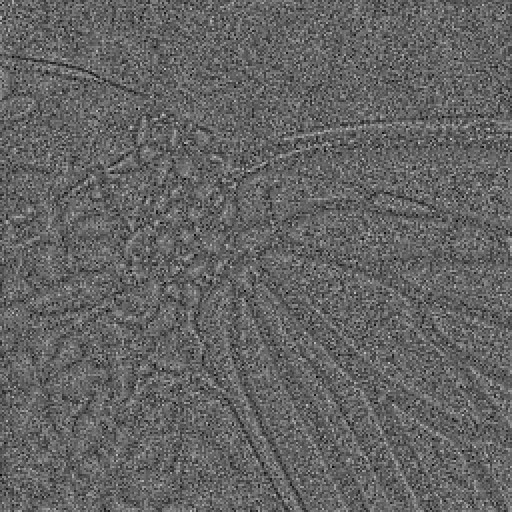}
    \end{minipage}
    }
    \subfigure[$u$ by our model]{
        \begin{minipage}{0.23\linewidth}
        \centering
        \includegraphics[width= \linewidth]{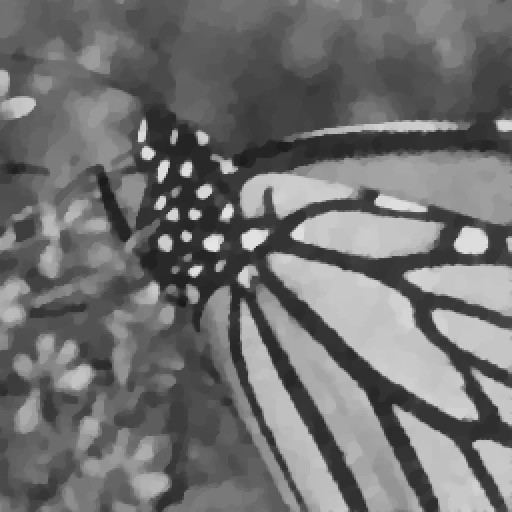}
    \end{minipage}
    }
    \subfigure[$f-u+100/255$]{
        \begin{minipage}{0.23\linewidth}
        \centering
        \includegraphics[width=\linewidth]{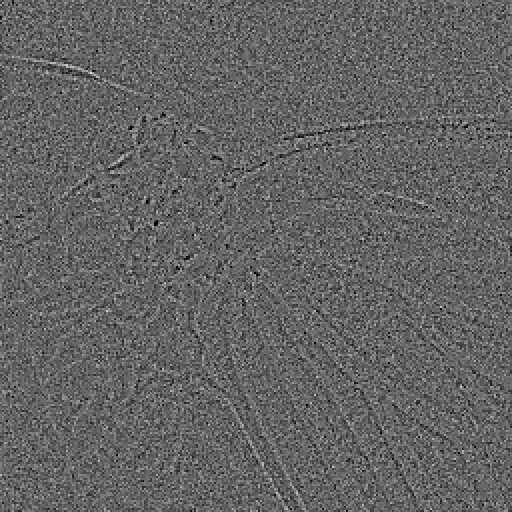}
    \end{minipage}
    }

    \subfigure[$u$ by MTV]{
        \begin{minipage}{0.23\linewidth}
        \centering
        \includegraphics[width= \linewidth]{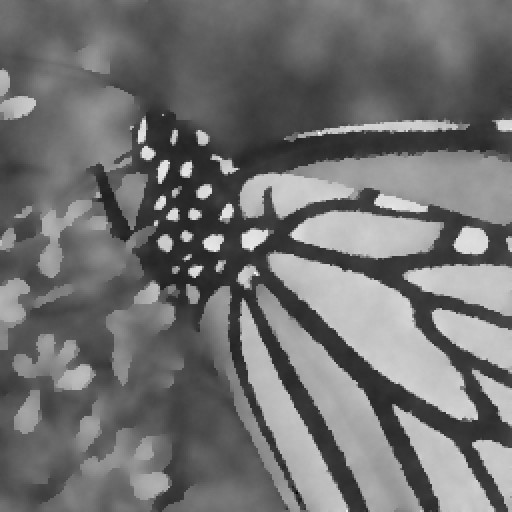}
    \end{minipage}
    }
    \subfigure[$f-u+100/255$]{
        \begin{minipage}{0.23\linewidth}
        \centering
        \includegraphics[width= \linewidth]{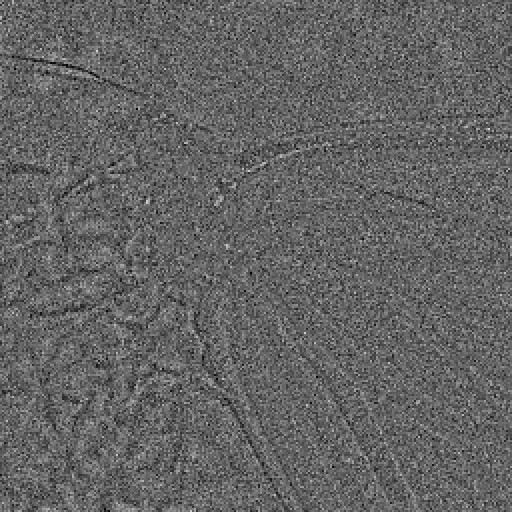}
    \end{minipage}
    }
    \subfigure[$u$ by our model with MTV]{
        \begin{minipage}{0.23\linewidth}
        \centering
        \includegraphics[width= \linewidth]{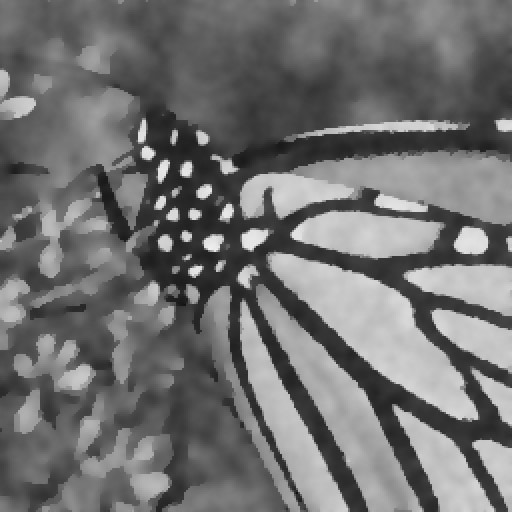}
    \end{minipage}
    }
    \subfigure[$f-u+100/255$]{
        \begin{minipage}{0.23\linewidth}
        \centering
        \includegraphics[width=\linewidth]{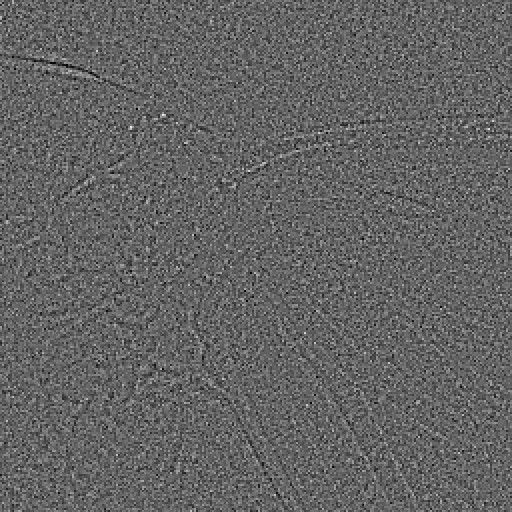}
    \end{minipage}
    }
    
    \caption{Results using the ROF model, our model \eqref{kr_new_rel}, MTV model ($a=8/255$)\eqref{log-tv} and the proposed model with MTV \eqref{kr_new_rel_log}.
    The additive noise $n$ has norm $\|n\| = 25$.
    For the proposed models, we choose the same parameter $\alpha=80$ and the other parameters are tuned such that the removed noise part has the norm $\|f-u\| \approx 25$ as the ROF model and MTV model.}
    \label{Fig:u_n_but}
\end{figure}

In Figure \ref{Fig:u_n_par}, we consider an image "Parrot". 
The ROF model suffers from the loss of image contrasts and the staircase effect.
As shown in residual images $f-u+100/255$, our model sweeps less meaningful signals as noise than the ROF model in the large scale part.
Our model with MTV keeps some fine textures in the restored images in region with rich textures, and also suppresses staircasing effect.
Table~\ref{table:psnr_1} shows that the PSNR for our model is not optimal, but our model with MTV produces a higher PSNR than the ROF model.

\begin{figure}
    \centering
    \subfigure[Original Image]{
        \begin{minipage}{0.23\linewidth}
        \centering
        \includegraphics[width= \linewidth]{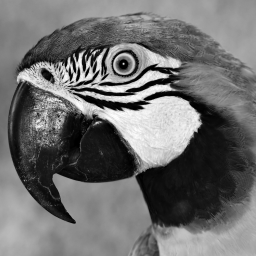}
    \end{minipage}
    }
    \subfigure[Noisy Image]{
        \begin{minipage}{0.23\linewidth}
        \centering
        \includegraphics[width= \linewidth]{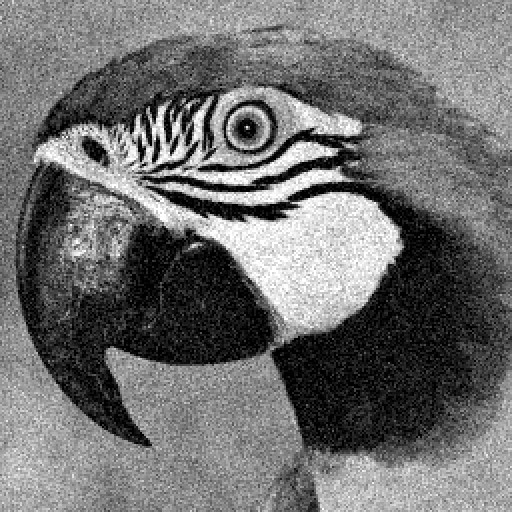}
    \end{minipage}
    }
    
    \subfigure[$u$ by ROF]{
        \begin{minipage}{0.23\linewidth}
        \centering
        \includegraphics[width= \linewidth]{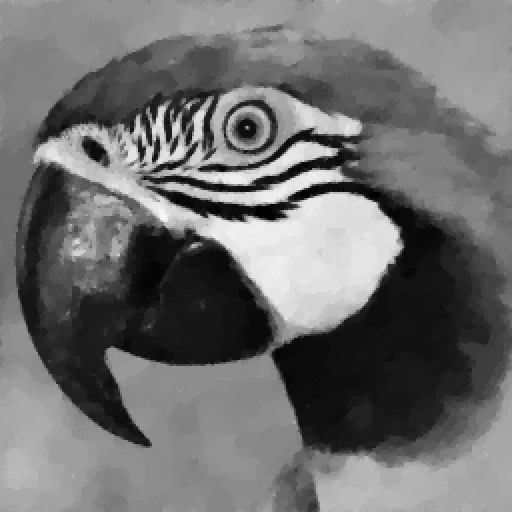}
    \end{minipage}
    }
    \subfigure[$f-u+100/255$]{
        \begin{minipage}{0.23\linewidth}
        \centering
        \includegraphics[width= \linewidth]{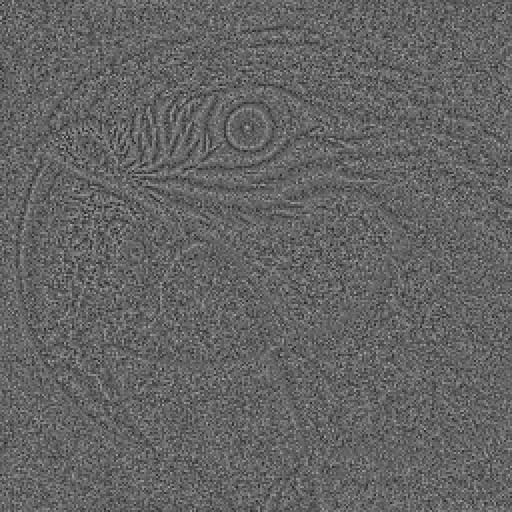}
    \end{minipage}
    }
    \subfigure[$u$ by our model]{
        \begin{minipage}{0.23\linewidth}
        \centering
        \includegraphics[width=\linewidth]{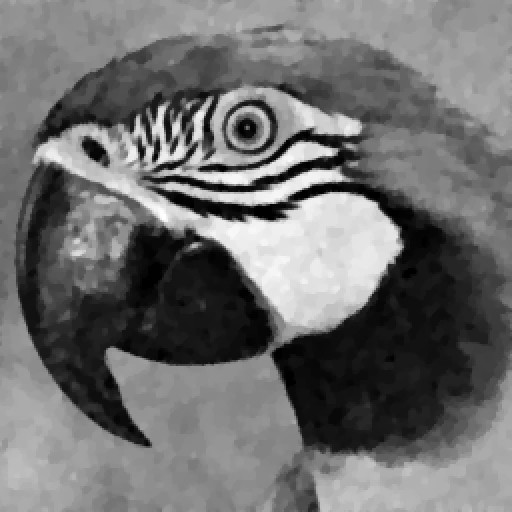}
    \end{minipage}
    }
    \subfigure[$f-u+100/255$]{
        \begin{minipage}{0.23\linewidth}
        \centering
        \includegraphics[width=\linewidth]{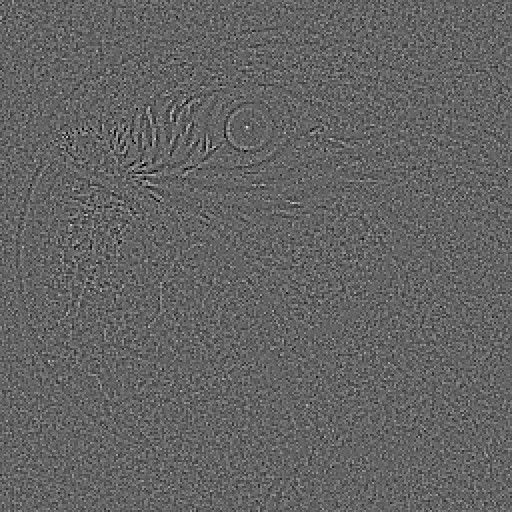}
    \end{minipage}
    }

    \subfigure[$u$ by MTV]{
        \begin{minipage}{0.23\linewidth}
        \centering
        \includegraphics[width= \linewidth]{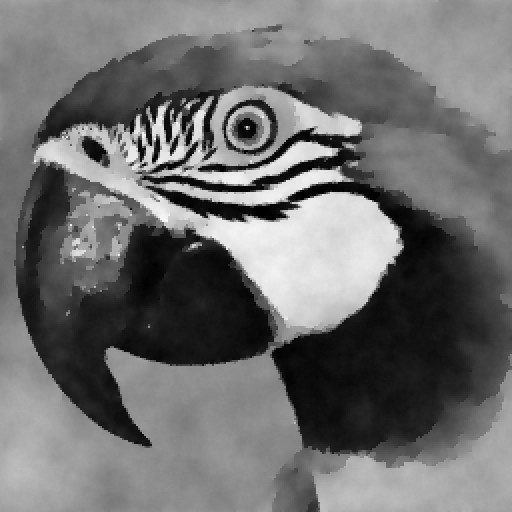}
    \end{minipage}
    }
    \subfigure[$f-u+100/255$]{
        \begin{minipage}{0.23\linewidth}
        \centering
        \includegraphics[width= \linewidth]{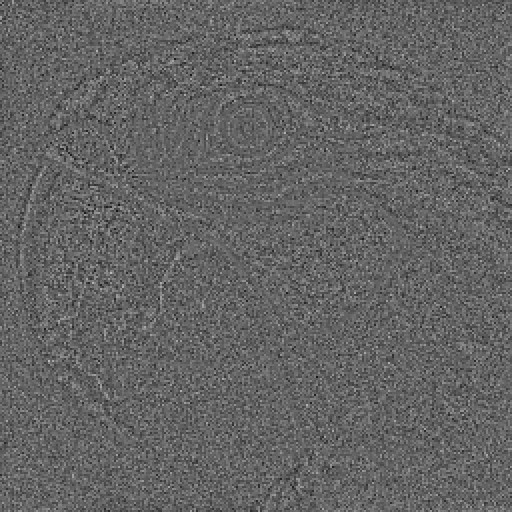}
    \end{minipage}
    }
    \subfigure[$u$ by our model with MTV]{
        \begin{minipage}{0.23\linewidth}
        \centering
        \includegraphics[width= \linewidth]{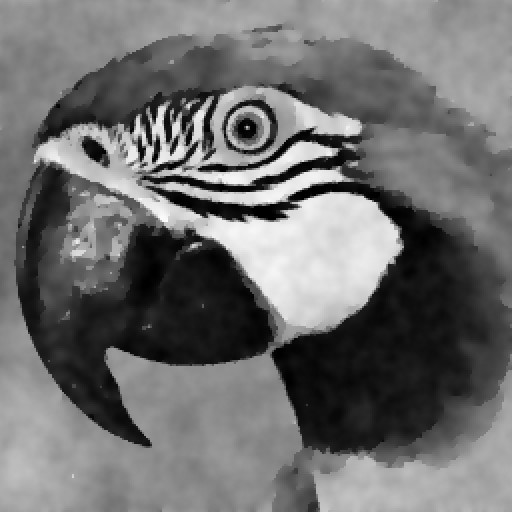}
    \end{minipage}
    }
    \subfigure[$f-u+100/255$]{
        \begin{minipage}{0.23\linewidth}
        \centering
        \includegraphics[width=\linewidth]{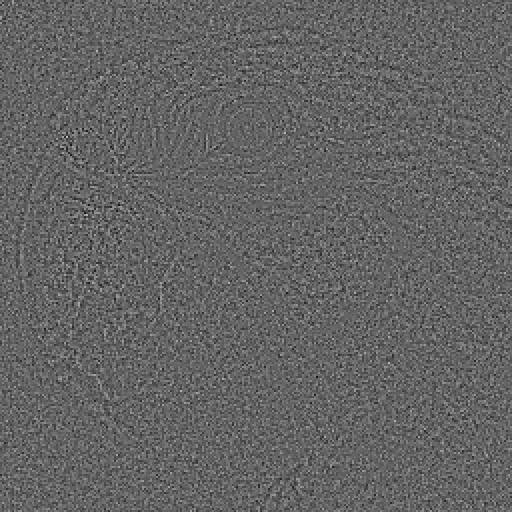}
    \end{minipage}
    }
    
    \caption{Results using the ROF model, our model \eqref{kr_new_rel}, MTV ($a=8/255$) \eqref{log-tv} and the proposed model with MTV \eqref{kr_new_rel_log}. 
    The additive noise $n$ has the norm $\|n\| = 20$ and the parameters are tuned such that the removed noise part has the norm $\|f-u\| \approx 20$ for all the models.
    }
    \label{Fig:u_n_par}
\end{figure}

    In Figure \ref{Fig:star}, we apply the proposed models to restore noisy and blurry images. 
The proposed model leaves less information in the residual image than the ROF model, and the restored image $u$ by using the proposed model appears to be sharper.
\begin{figure}
    \centering
    \subfigure[Original image]{
        \begin{minipage}{0.3\linewidth}
        \centering
        \includegraphics[width= \linewidth]{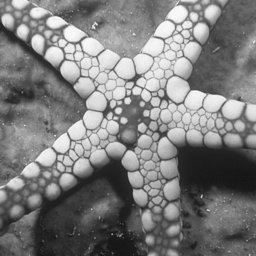}
    \end{minipage}
    }
    \subfigure[$u$ by ROF]{
        \begin{minipage}{0.3\linewidth}
        \centering
        \includegraphics[width= \linewidth]{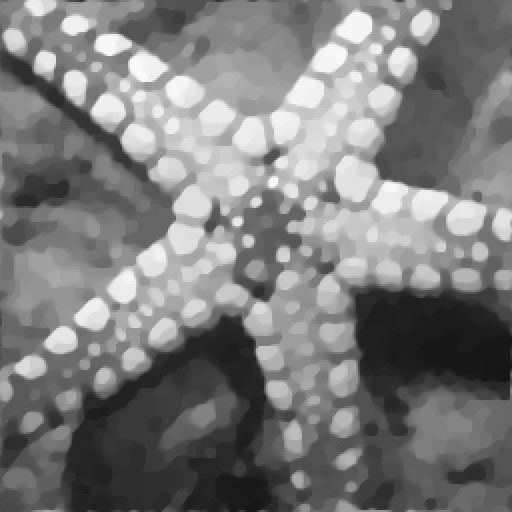}
    \end{minipage}
    }
    \subfigure[$u$ by our model]{
        \begin{minipage}{0.3\linewidth}
        \centering
        \includegraphics[width= \linewidth]{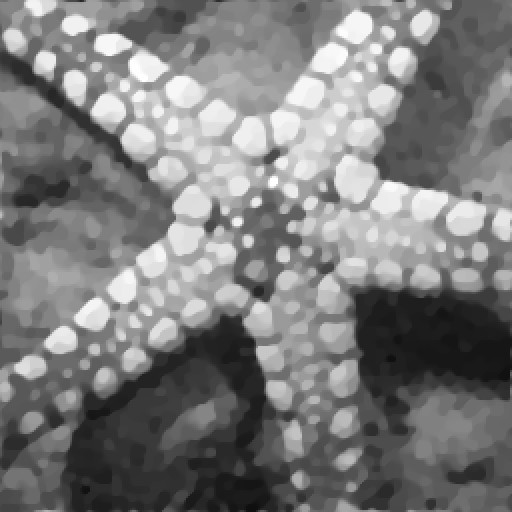}
    \end{minipage}
    } 
    
    \subfigure[Noisy and blurred image]{
        \begin{minipage}{0.3\linewidth}
        \centering
        \includegraphics[width= \linewidth]{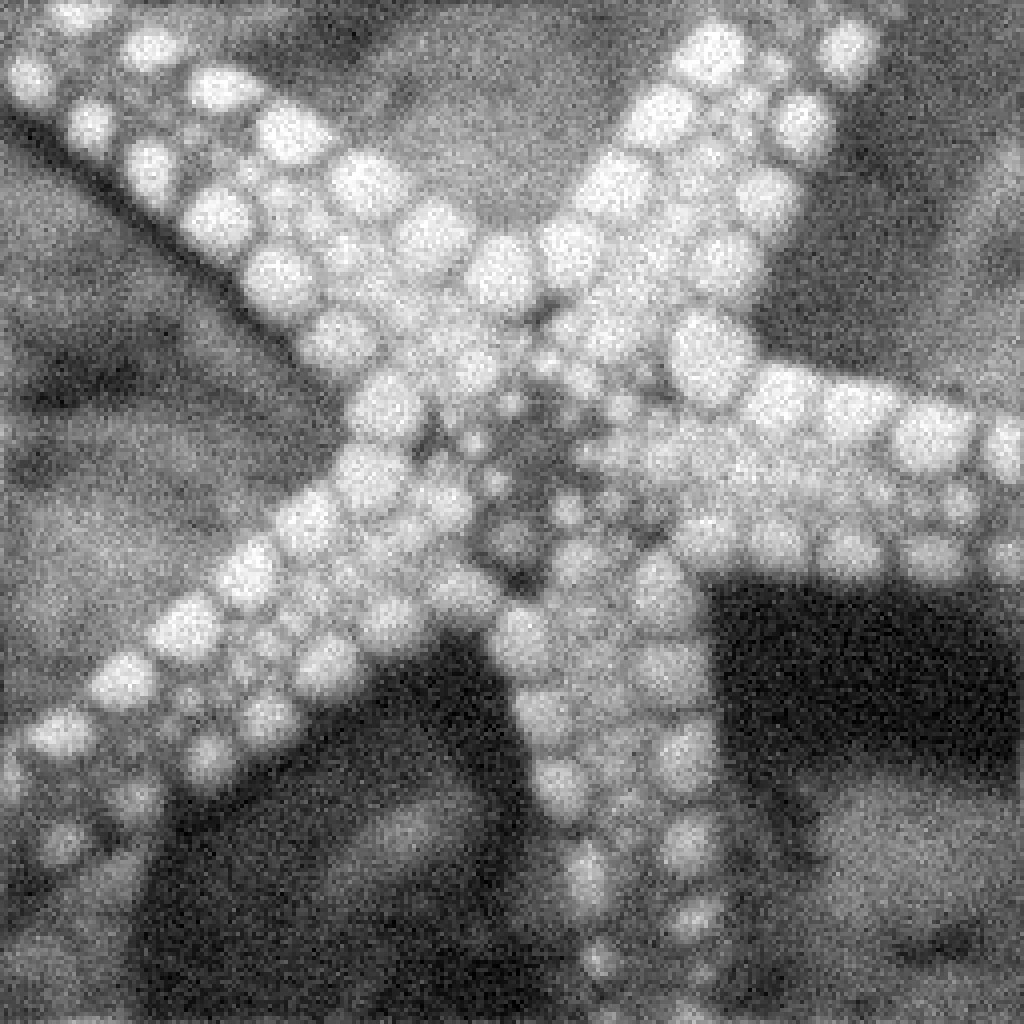}
    \end{minipage}
    }
    \subfigure[$f-K*u+100/255$]{
        \begin{minipage}{0.3\linewidth}
        \centering
        \includegraphics[width= \linewidth]{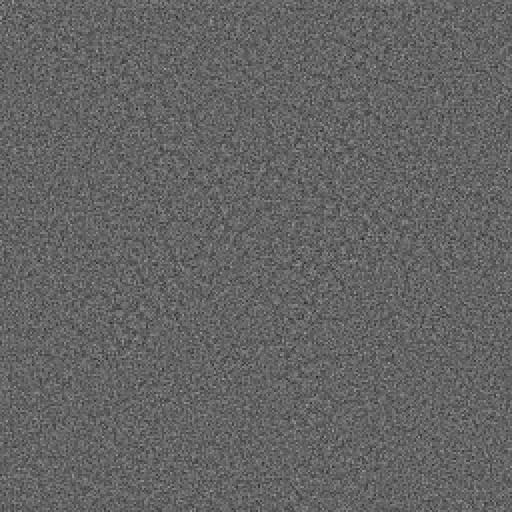}
    \end{minipage}
    }
    \subfigure[$f-K*u+100/255$]{
        \begin{minipage}{0.3\linewidth}
        \centering
        \includegraphics[width= \linewidth]{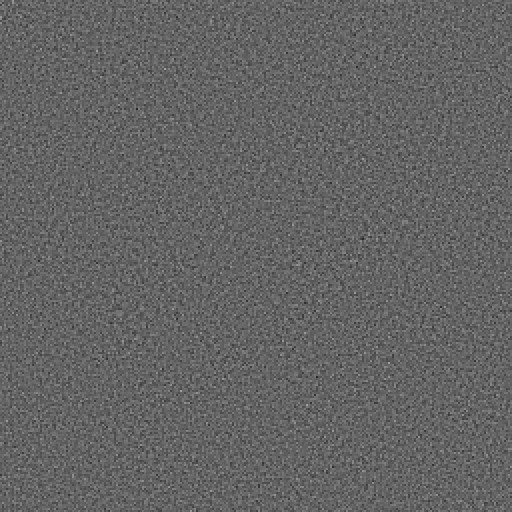}
    \end{minipage}
    }

    \caption{Results using the ROF model, and our model \eqref{kr_new_rel}. The noisy and blurred image is obtained by applying a Gaussian blur $(G,1.6,25)$ and adding a Gaussian noise with $\sigma=15$.
    The additive noise $n$ has the norm $\|n\|_{L^2} \approx 15$. The parameters are adjusted such that the removed noise parts of all the models have the norm $\|f-K*u\|_{L^2} \approx 15$.}
    \label{Fig:star}
\end{figure}

To monitor the convergence of the numerical method in Sect~\ref{sec:pdhg} for the subproblem \eqref{pro1}, we check the fixed point residual $R^k$ \eqref{residual} and the constraint in \eqref{vpro}:
\begin{equation}\label{cons}
    \frac{\|v^k - \mathrm{div}(m^k)\|}{\|v^k-f\|}.
\end{equation}
The value of $R^k$ \eqref{residual} can be used as a stop criterion to terminate the iterative steps \eqref{pdhg-kr} and we usually set $\epsilon=O(h^2)$.
In Figure \ref{Fig:pdhg}, we present the plots of $R^k$ and the constraint versus iteration for the subproblem \eqref{pro1} in Algorithm \ref{alg_uv} for the example in Figure \ref{Fig:u_n_house}.

\begin{figure}
    \centering
    \subfigure{
        \begin{minipage}{0.4\linewidth}
        \centering
        \includegraphics[width= \linewidth]{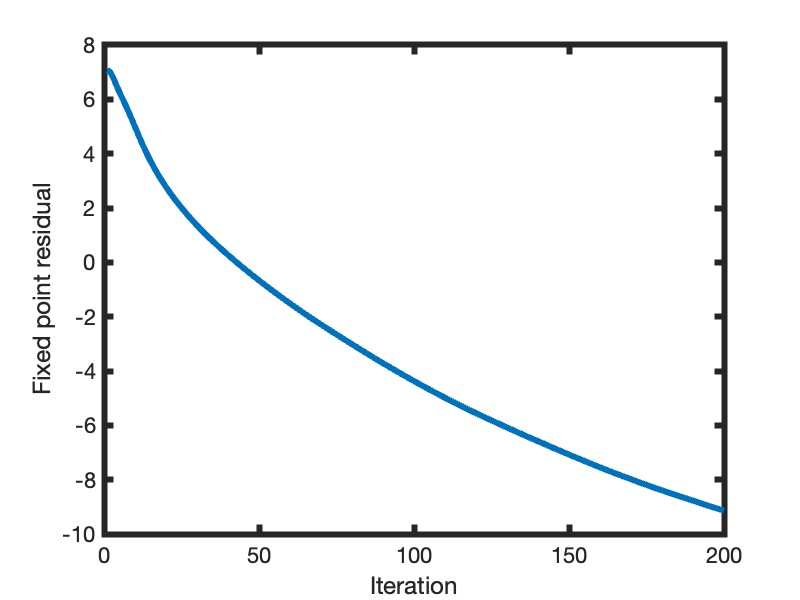}
    \end{minipage}
    }
    \subfigure{
        \begin{minipage}{0.4\linewidth}
        \centering
        \includegraphics[width= \linewidth]{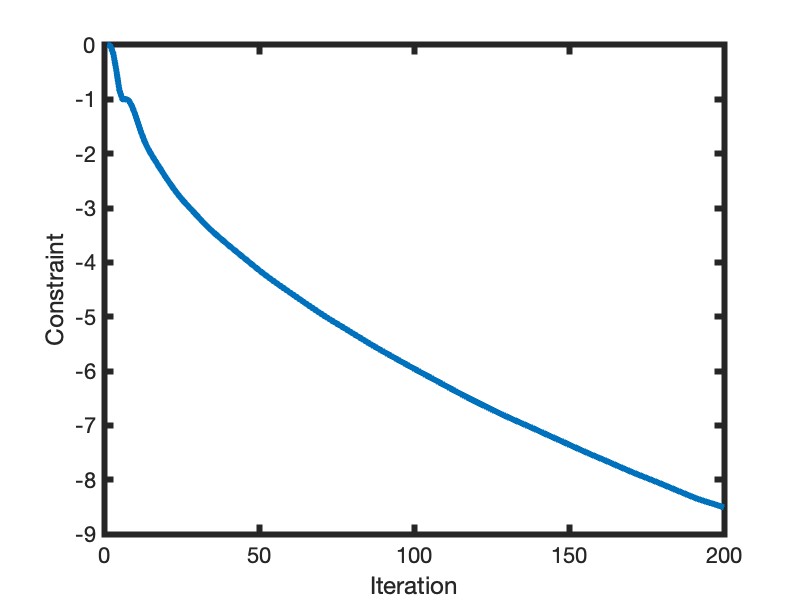}
    \end{minipage}
    }
    \caption{The plots of fixed point residual $R^k$ \eqref{residual} and the constraint \eqref{cons} versus iteration.
    All the above quantities are shown in \textit{log-scale}.}
    \label{Fig:pdhg}
\end{figure}

To monitor the convergence of Algorithm \ref{alg_uv}, we check the relative error of the solution $u^n$ and $v^n$,
    \begin{equation}\label{rel-uv}
        \frac{\|u^n-u^{n-1}\|}{\|u^{n-1}\|},\quad \frac{\|v^n-v^{n-1}\|}{\|v^{n-1}\|}.
    \end{equation}
In Figure \ref{Fig:alg2}, we list the plots of them versus iteration by applying Algorithm \ref{alg_uv} for the experiment in Figure \ref{Fig:u_n_house}.

\begin{figure}
    \centering
    \subfigure{
        \begin{minipage}{0.4\linewidth}
        \centering
        \includegraphics[width= \linewidth]{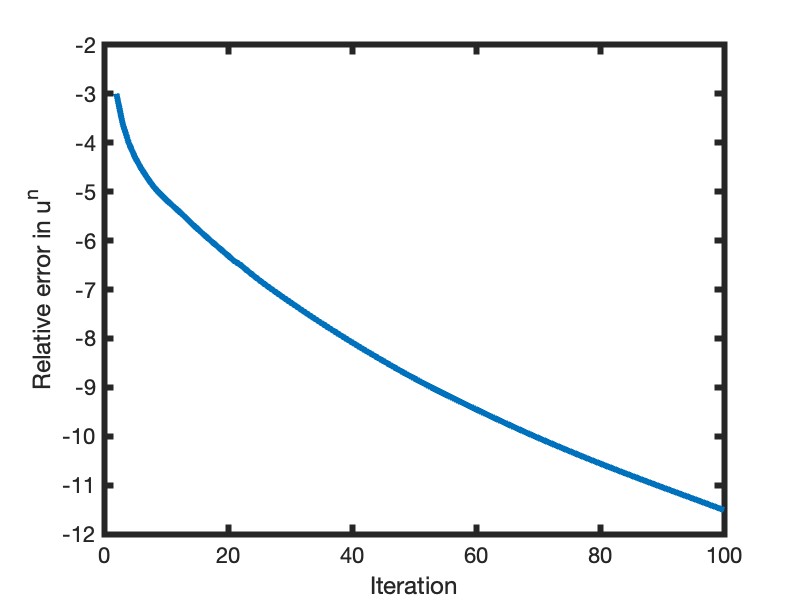}
    \end{minipage}
    }
    \subfigure{
        \begin{minipage}{0.4\linewidth}
        \centering
        \includegraphics[width= \linewidth]{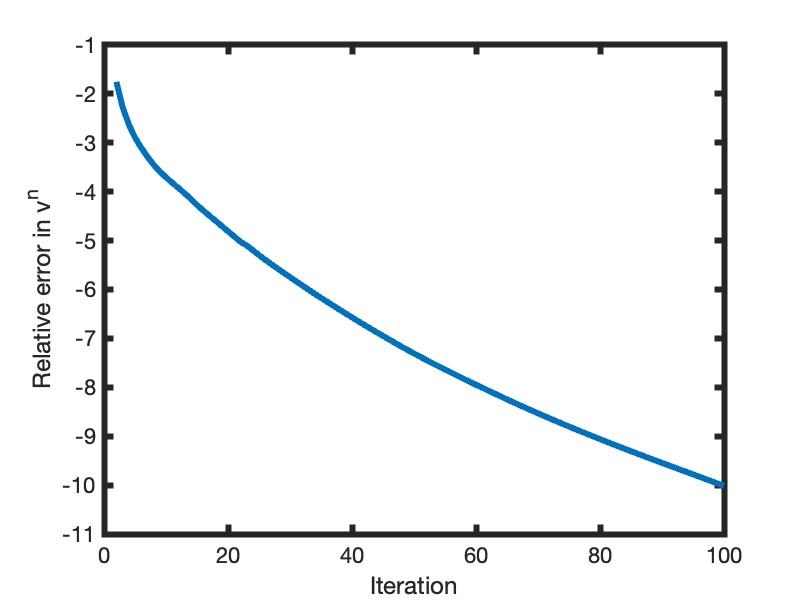}
    \end{minipage}
    }
    \caption{
    The plots of the relative error of $u^n$ and $v^n$ \eqref{rel-uv} versus iteration. All the above quantities are shown in \textit{log-scale}.}
    \label{Fig:alg2}
\end{figure}

\section{Conclusion}\label{sec:conclusion}

    In this paper, we propose image restoration models which combines the total variation regularization with the dual Lipschitz norm from optimal transport.
    To solve of the proposed models, we construct numerical method based on the Primal-Dual Hybrid Gradient algorithms for the calculation of the Wasserstain-1 distance and the augmented Lagrangian method for the total variation. The convergence analysis of the numerical method is presented.
    We prove the existence and uniquess of the minimizer of the associated functional and some additional theoretical results based on the relation between the dual Lipschitz norm and the $G$-norm. 
    Numerical results are presented to illustrate the features of the proposed models for image restoration and cartoon-texture decomposition, and also the efficacy of the designed numerical methods.
    Moreover, to preserve image contrast and suppress the staircase phenomenon better, we also consider replacing the total variation in our model by one of its modifications as in \cite{zhu}.


\begin{acknowledgements}
This work was partially supported by the NSFC Projects No. 12025104, 12001529.
\end{acknowledgements}

%
\section*{Conflict of interest}
The authors declare that they have no conflict of interest.


\bibliographystyle{spmpsci}
\bibliography{ref.bib}

%
%

\end{document}